\newcommand{\shrinkmargins}[1]{
  \addtolength{\textheight}{#1\topmargin}
  \addtolength{\textheight}{#1\topmargin}
  \addtolength{\textwidth}{#1\oddsidemargin}
  \addtolength{\textwidth}{#1\evensidemargin}
  \addtolength{\topmargin}{-#1\topmargin}
  \addtolength{\oddsidemargin}{-#1\oddsidemargin}
  \addtolength{\evensidemargin}{-#1\evensidemargin}
  }
\DeclareMathOperator{\MG}{MG}
\DeclareMathOperator{\TMG}{TMG}
\DeclareMathOperator{\no}{no}
\DeclareMathOperator{\Aut}{Aut}
\DeclareMathOperator{\Hom}{Hom}
\DeclareMathOperator{\car}{char}
\DeclareMathOperator{\SL}{SL}
\DeclareMathOperator{\GL}{GL}
\DeclareMathOperator{\Gr}{Gr}
\DeclareMathOperator{\M}{M}
\DeclareMathOperator{\Sp}{Sp}
\DeclareMathOperator{\GSp}{GSp}
\DeclareMathOperator{\Cl}{Cl}
\DeclareMathOperator{\End}{End}
\DeclareMathOperator{\Spec}{Spec}
\DeclareMathOperator{\Frob}{Frob}
\DeclareMathOperator{\Jac}{Jac}
\DeclareMathOperator{\coker}{coker}
\DeclareMathOperator{\im}{im}
\DeclareMathOperator{\rank}{rank}
\newcommand{\et}{\mathrm{\acute{e}t}}
\DeclareMathOperator{\mult}{m}
\DeclareMathOperator{\loc}{ll}
\DeclareMathOperator{\len}{length}
\DeclareMathOperator{\BT}{BT}
\DeclareMathOperator{\DBT}{DBT}
\DeclareMathOperator{\Div}{div}
\newcommand{\field}[1]{\mathbf{#1}}
\newcommand{\Q}{\field{Q}}
\newcommand{\Zp}{\field{Z}_p}
\newcommand{\Z}{\field{Z}}
\newcommand{\F}{\field{F}}
\renewcommand{\AA}{\mathcal{A}}
\renewcommand{\P}{\field{P}}
\newcommand{\ord}{\mbox{ord}}
\newcommand{\EE} {\mathcal{E}}
\newcommand{\FF} {\mathcal{F}}
\newcommand{\DD} {\mathcal{D}}
\newcommand{\GG} {\mathcal{G}}
\newcommand{\ra}{\rightarrow}
\newcommand{\HH}{\mathcal{H}}
\newcommand{\mat}[4]{\left[\begin{array}{cc}#1 & #2 \\
                                         #3 & #4\end{array}\right]}
\newcommand{\inj}{\hookrightarrow}
\newcommand{\MM}{\mathcal{M}}
\newcommand{\beq}{\begin{displaymath}}
\newcommand{\eeq}{\end{displaymath}}
\newcommand{\beqn}{\begin{equation}}
\newcommand{\eeqn}{\end{equation}}
\newcommand{\D}{\mathbf{D}}
\theoremstyle{plain}
\newtheorem{thm}{Theorem}[section]
\newtheorem{prop}[thm]{Proposition}
\newtheorem{cor}[thm]{Corollary}
\theoremstyle{definition}
\newtheorem{defn}[thm]{Definition}
\newtheorem*{questions}{Questions}
\newtheorem*{problem}{Problem}
\theoremstyle{remark}
\newtheorem{rem}[thm]{Remark}
\newcommand{\Rmnum}[1]{\expandafter\@slowromancap\romannumeral #1@}
\title{Random Dieudonn\'{e} modules, random $p$-divisible groups, and random curves over finite fields}
\author{Bryden Cais, Jordan S. Ellenberg, and David Zureick-Brown}
\begin{document}
\maketitle

\begin{abstract}
We describe a probability distribution on isomorphism classes of principally quasi-polarized $p$-divisible groups over a finite field $k$ of characteristic $p$ which can reasonably be thought of as ``uniform distribution," and we compute the distribution of various statistics ($p$-corank, $a$-number, etc.) of $p$-divisible groups drawn from this distribution.  It is then natural to ask to what extent the $p$-divisible groups attached to a randomly chosen hyperelliptic curve (resp. curve, resp. abelian variety) over $k$ are uniformly distributed in this sense.  This heuristic is analogous to conjectures of Cohen-Lenstra type for $\car k \neq p$, in which case  the random $p$-divisible group is defined by a random matrix recording the action of Frobenius.  Extensive numerical investigation reveals some cases of agreement with the heuristic and some interesting discrepancies.  For example, plane curves over $\F_3$ appear substantially less likely to be ordinary than hyperelliptic curves over $\F_3$.
\end{abstract}

\section{Introduction}

Let $q$ be a power of a prime $p$, and let $C/\F_q$ be a genus $g$ hyperelliptic curve with affine equation
\beq
y^2 = f(x)
\eeq
where $f$ is chosen {\em at random} from the set of monic squarefree polynomials of degree $2g+1$.  We can think of $\F_q(C)$ as a ``random quadratic extension of $\F_q (t)$," and ask about the probability distribution (if there is one) on arithmetic invariants of $C$.

For example: if $\ell$ is an odd prime not equal to $p$, one can ask
about the distribution of the $\ell$-primary part of the ideal class
group $\Cl(\F_q (C))$ -- or, equivalently, the group of $\F_q$-rational
points of the $\ell$-divisible group $J(C)[\ell^\infty]$.   The
distribution of $\Cl(\F_q (C))[\ell^\infty]$ is the subject of the {\em
  Cohen-Lenstra conjectures}~\cite{cohen1984heuristics},
which predict that the probability distribution on the isomorphism classes of $\Cl(\F_q (C))[\ell^\infty]$ approaches a limit, the so-called {\em Cohen-Lenstra distribution}, as $g \ra \infty$.  More precisely, Cohen and Lenstra proposed this conjecture for the class groups of quadratic number fields, but it was quickly understood (see e.g., \cite{friedman1989distribution}) that the underlying philosophy was just as valid for hyperelliptic function fields.

Much less effort has been devoted to the case where $\ell = p$,
perhaps because this question about function fields has no obvious
number field analogue.  Nonetheless, it is quite natural to ask
whether the $p$-adic invariants of random hyperelliptic curves over
$\F_q$ (or random curves, or random abelian varieties) obey statistical
regularities.  For example:  

\begin{quote}
What is the probability that a random hyperelliptic curve has ordinary Jacobian?
\end{quote}

More precisely:  let $P_o(q,d)$ be the proportion of the $q^d - q^{d-1}$ monic squarefree polynomials $f(x)$ over $\F_q$ of degree $d$ such that the curve $C_f$ with equation $y^2 = f(x)$ has ordinary Jacobian.  Then we ask:  does $\lim_{d \ra \infty} P_o(q,d)$ exist, and if so, what is its value?  Of course one can ask similar questions about other invariants of the $p$-divisible group of $\Jac(C_f)$, such as $a$-number, $p$-rank, Newton polygon, or final type.

One can interpret the Cohen-Lenstra conjecture as an assertion that
the $\ell$-divisible group of $\Jac(C_f)$ behaves like a ``random
principally polarized $\ell$-divisible group over $\F_q$."  (This
point of view begins with Friedman and
Washington~\cite{friedman1989distribution} and has subsequently been refined by
Achter~\cite{Achter:distributionClassGroups},
 Malle~\cite{Malle:distribution},
 and Garton~\cite{Garton:thesis}.)  A principally polarized $\ell$-divisible group of rank $2g$ over $\F_q$ is the same thing as an abelian group isomorphic to $(\Q_\ell / \Z_\ell)^{2g}$, equipped with a nondegenerate symplectic form $\omega$ and an automorphism $F$ satisfying $\omega(Fx,Fy) = q \omega(x,y)$.  In other words, $F$ is chosen from a certain coset of $\Sp_{2g}(\Z_\ell)$ in $\GSp_{2g}(\Z_\ell)$.  Choosing $F$ at random with respect to Haar measure then specifies a notion of ``random $\ell$-divisible group of rank $2g$ over $\F_q$," which allows us to compute notions like ``the probability that a principally polarized $\ell$-divisible group of rank $2g$ over $\F_q$ has no nontrivial $\F_q$-rational point."  Moreover, as $g$ goes to infinity, this probability approaches a limit; when $q$ is not congruent to $1$ mod $\ell$, the limit is
\beq
\prod_{i=1}^\infty (1-\ell^{-1})
\eeq
which is precisely the Cohen-Lenstra prediction for the probability that $\Jac(C_f)[\ell^\infty](\F_q)$ is trivial.

In the same way, one might ask whether the $p$-divisible group of
$\Jac(C_f)$ is in any sense a ``random principally quasi-polarized
$p$-divisible group over $k$."  The first task is to define this
notion.    A $p$-divisible
group over $\F_q$ is determined by its {\bf Dieudonn\'{e} module,} a
free $\Z_q:=W(\F_q)$-module with some extra ``semilinear algebra"
structures.  It turns out that there is a natural correspondence between principally quasi-polarized Dieudonn\'{e}
modules of rank $2g$ over $\Z_q$ a certain
double coset of $\Sp_{2g}(\Z_q)$ in the group of $\Z_p$-linear transformations of $\Z_q^{2g}$.  
\phantomsection
\label{TODO:28}
From this
description one obtains a probability measure on principally quasi-polarized
Dieudonn\'{e} modules, and thus on principally quasi-polarized $p$-divisible
groups over $k$.  In \S \ref{sec:randompdiv} we study the statistics of several natural invariants of random principally quasi-polarized $p$-divisible groups, and show that these approach limiting distributions as $g$ goes to infinity.  For example, we prove
\begin{prop}  The probability that a random principally quasi-polarized $p$-divisible group of rank $2g$ over $\F_q$ has $a$-number $r$ approaches
\beq
 q^{- {r +1 \choose 2}} \prod_{i=1}^\infty(1+q^{-i})^{-1} \prod_{i=1}^r (1-q^{-i})^{-1}.
\eeq
as $g \ra \infty$.  In particular, the probability that a random principally quasi-polarized $p$-divisible group of rank $2g$ over $\F_q$ is {\bf ordinary} approaches
\begin{equation}	
\prod_{i=1}^\infty(1+q^{-i})^{-1} = \prod_{j=1}^\infty (1-q^{1-2j}).
\label{eq:probord}
\end{equation}
as $g \ra \infty$.
\end{prop}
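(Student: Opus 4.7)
The plan is to reduce the $a$-number calculation to a counting problem on pairs of Lagrangian subspaces in the symplectic $\F_q$-vector space $M/pM$, and then extract the $g\to\infty$ asymptotic by a direct $q$-analog computation.

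First I would reduce the $a$-number to mod-$p$ data. For a principally quasi-polarized Dieudonn\'e module $M$ of rank $2g$ over $\Z_q$, one has $a(M)=\dim_{\F_q} M/(FM+VM)$. Set $\overline{M}:=M/pM$, $L_1:=\im\overline{V}$, and $L_2:=\im\overline{F}$. The identities $FV=VF=p$ give $\overline{F}\,\overline{V}=\overline{V}\,\overline{F}=0$, so $L_1\subseteq\ker\overline{F}$ and $L_2\subseteq\ker\overline{V}$; since each of $\overline{F},\overline{V}$ has $\F_q$-rank $g$, these inclusions are equalities. The polarization compatibility $\langle Fx,y\rangle=\sigma\langle x,Vy\rangle$ gives $\langle\overline{F}x,\overline{F}y\rangle=0=\langle\overline{V}x,\overline{V}y\rangle$ mod $p$, so $L_1,L_2$ are isotropic and hence Lagrangian. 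The identity $\dim(L_1+L_2)=2g-\dim(L_1\cap L_2)$ then yields $a(M)=\dim(L_1\cap L_2)$.

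Second, I would show that the random-Dieudonn\'e-module distribution induces the uniform distribution on pairs $(L_1,L_2)$ of Lagrangians in $\overline{M}$. Reduction modulo $p$ pushes Haar measure on the double coset of admissible Frobenii to the uniform measure on its finite image. Each admissible mod-$p$ operator $\overline{F}$ is determined by the pair $(\ker\overline{F},\im\overline{F})=(L_1,L_2)$ together with a $\sigma$-semilinear isomorphism $\overline{M}/L_1\iso L_2$, and the number of such isomorphisms equals $|\GL_g(\F_q)|$ independent of the pair, so the push-forward to pairs is uniform.

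Third, I would carry out the Lagrangian Grassmannian count. Fix $L_1$. For each $r$-dimensional subspace $W\subseteq L_1$, the Lagrangians $L_2$ with $L_1\cap L_2=W$ correspond to Lagrangians in the symplectic quotient $W^\perp/W$ (of dimension $2(g-r)$) transverse to $L_1/W$; parametrized by symmetric $(g-r)\times(g-r)$ matrices over $\F_q$, there are $q^{\binom{g-r+1}{2}}$ of them. Summing over the $\binom{g}{r}_q$ choices of $W$ and dividing by the total Lagrangian count $\prod_{i=1}^g(q^i+1)$ gives
\[
P_g(a=r)=\frac{\binom{g}{r}_q\cdot q^{\binom{g-r+1}{2}}}{\prod_{i=1}^g (q^i+1)}.
\]
Using $\binom{g}{r}_q\sim q^{r(g-r)}/\prod_{i=1}^r(1-q^{-i})$ and $\prod_{i=1}^g(q^i+1)=q^{g(g+1)/2}\prod_{i=1}^g(1+q^{-i})$, the powers of $q$ combine to $r(g-r)+\binom{g-r+1}{2}-\binom{g+1}{2}=-\binom{r+1}{2}$, yielding the claimed limit. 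The ordinary case is the Euler identity $\prod_{i\geq 1}(1+q^{-i})^{-1}=\prod_{j\geq 1}(1-q^{1-2j})$, obtained from $(1+q^{-i})(1-q^{-i})=1-q^{-2i}$.

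The main obstacle is the second step: rigorously verifying that Haar measure on the Frobenius double coset pushes forward to the uniform measure on arbitrary pairs of Lagrangians. One must confirm that the only constraints on $\overline{F}$ arising from the symplectic form are that $L_1$ and $L_2$ be Lagrangian (with no further hidden relation between them), and that the fiber of $\overline{F}\mapsto(L_1,L_2)$ has size independent of the pair.
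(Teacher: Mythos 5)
Your proposal is correct, and its overall strategy coincides with the paper's: identify the $a$-number with $\dim(\ker \overline{F}\cap \im \overline{F})$, observe that this pair of maximal isotropic subspaces is uniformly distributed, and then do a count in the Lagrangian Grassmannian, taking $g\to\infty$ at the end. The execution differs in two places. For the uniformity step, the paper isolates this as Proposition~\ref{pr:uniformf} and proves it by transitivity of the $\Sp(W)\times\Sp(W)$-action on pairs of maximal isotropics together with invariance of the pushed-forward Haar measure; your fiber-counting argument (each admissible $\overline{F}$ is the data of $(\ker\overline{F},\im\overline{F})$ plus a $\sigma$-semilinear isomorphism $W/\ker\overline{F}\iso\im\overline{F}$, with fibers of constant size $|\GL_g(\F_q)|$) is equivalent, and the ``hidden constraint'' worry you flag is dispatched exactly as in the paper: Witt's theorem makes $\Sp(W)$ transitive on Lagrangians, and the Levi quotient $\GL(W_2)$ of the Siegel parabolic acts transitively on the choices of $F'$, so the reduction of the double coset is precisely the set of all $\sigma$-endomorphisms with Lagrangian kernel and image and no further relation between them. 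For the count itself, the paper uses orbit--stabilizer for $\Sp_{2g}(\F_q)$ acting on pairs with $r$-dimensional intersection, computing the stabilizer explicitly inside the Siegel parabolic; you instead fix $L_1$, stratify by the intersection $W\subseteq L_1$, and count Lagrangians in the symplectic reduction $W^{\perp}/W$ transverse to $L_1/W$ via symmetric matrices. Your route buys a clean exact finite-$g$ formula
\begin{equation*}
P_g(a=r)=\binom{g}{r}_{\!q}\, q^{\binom{g-r+1}{2}}\Big/\prod_{i=1}^{g}(q^{i}+1),
\end{equation*}
whose limit visibly matches the stated constant, whereas the paper's orbit--stabilizer computation generalizes more directly to the conditional analyses (e.g.\ of $F'$ and the $p$-corank) carried out later in the same section.
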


We call the infinite product in \eqref{eq:probord} the {\bf Malle-Garton constant}, since it is the same constant that occurs in the work of the two named authors on conjectures of Cohen-Lenstra type over fields containing a $p$th root of unity.

We also compute some statistics for the $p$-rank and the group of
$\F_q$-rational points of a random principally quasi-polarized $p$-divisible group $G$; for instance, we find that the probability that the $p$-rank of $G$ is $g-1$ (one smaller than maximum) is 
\beq
q^{-1} \prod_{i=1}^\infty(1+q^{-i})^{-1}
\eeq
and the probability that the group of $\F_q$-rational $p$-torsion
points has dimension $r$ (as an $\F_p$-vector space) is exactly the Cohen-Lenstra probability
\begin{equation}
q^{-r^2} \prod_{i=1}^r (1-q^{-i})^{-1} \prod_{j=r+1}^\infty (1-q^{-j}).
\label{eq:cohenlenstraintro}
\end{equation}

The notion of ``random $p$-divisible group" having been specified, it
remains to ask whether the $p$-divisible groups of random
hyperelliptic Jacobians act like random $p$-divisible groups.  In \S \ref{sec:experiments}, we gather some numerical evidence concerning
this question.  The results are in some sense affirmative, but display
several surprising (to us) features.  

For instance, the probability that a random hyperelliptic curve over $\F_3$ is ordinary does not appear to approach the value given in \eqref{eq:probord}.  Rather, it is apparently converging to $2/3$.  However, it does not seem that $\lim_{d \ra \infty} P_o(q,d) = 1-1/q$ in general.  For instance, $P_o(5,d)$ appears to be converging to $(1-1/5)(1-1/125) = 0.7936$, which is a truncation of the second infinite product in \eqref{eq:probord}.  For larger $q$, the difference between $P_o(q,d)$ and $\prod_{i=1}^\infty(1+q^{-i})^{-1}$ is too small to detect reliably from our data.  We have no principled basis to make a conjecture about the precise value $\lim_{d \ra \infty} P_o(q,d)$ but our data is certainly consistent with the hypothesis that the limit exists, and that
\begin{equation}
\frac{\log(\lim_{d \ra \infty} P_o(p,d) - \prod_{i=1}^\infty(1+p^{-i})^{-1})}{\log p}
\label{eq:approaching}
\end{equation}
goes to $-\infty$ as $p$ grows.  

One might speculate that the discrepancy between experiment and heuristic is a result of our restriction to hyperelliptic curves.  What if we consider random curves, or even random principally polarized abelian varieties, more generally?  The moduli spaces $\MM_g$ and $\AA_g$ are both of general type for large $g$, making it hopeless to sample curves or abelian varieties truly at random.  But one can at least study various rationally parametrized families.  We find that the proportion of ordinary {\em plane curves} over $\F_3$ is indistinguishable from $\prod_{i=1}^\infty(1+3^{-i})^{-1}$.  In other words, {\em plane curves over $\F_3$ are substantially less likely than hyperelliptic curves to be ordinary.}  We have no explanation for this phenomenon.  The proportion of ordinary plane curves over $\F_2$ does not seem to be $\prod_{i=1}^\infty(1+2^{-i})^{-1}$; the data is consistent, though, with the hypothesis that \eqref{eq:approaching} holds for plane curves as well as hyperelliptic curves.

In \S \ref{ss:geometry}, we discuss the geometry of various strata in the moduli space of hyperelliptic curves, and what relationship the statistical phenomena observed in \S \ref{sec:experiments} bear to the geometry of these spaces.

\subsection*{Acknowledgements}
We would like to thank Jeff Achter,  Derek Garton, Tim Holland, Bjorn
Poonen, and Rachel Pries for useful conversations.

\section{From $p$-divisible groups to Dieudonn\'{e} modules}
\label{s:background}

Let $k$ be a perfect field of characteristic $p$.
We briefly recall the classification of finite flat group schemes of $p$-power order and of $p$-divisible groups over $k$ afforded by (covariant) Dieudonn\'e theory.  Some general references on these topics
are \cite{Demazure:divisibleLecture}, \cite{Fontaine:divisible},
\cite[\Rmnum{2}--\Rmnum{3}]{Grothendieck:barsottiTate} and
\cite{Tate:pDivisible}. 
Readers already familiar with this story may skip immediately to the next section.


Let $G$ be a finite commutative $k$-group scheme of $p$-power order.\footnote{The
{\em order} of $G$ is by definition the $\dim_k(A)$ where $G=\Spec(A)$.}
We denote by $G^{\vee}$ the Cartier dual of $G$, and note that one has a
canonical ``double duality" isomorphism $G^{\vee\vee}\simeq G$.
We write $F_G\colon G\rightarrow G^{(p)}$
and $V_G\colon G^{(p)}\rightarrow G$ for the relative Frobenius and Verscheibung morphisms,
respectively, and when $G$ is clear from context we will simply write 
$F^r$ and $V^r$ for the $r$-fold iterates of relative Frobenius and Verscheibung, defined in 
the obvious way.

Note that by definition, the composition of $F$ and $V$ in either order
is multiplication by $p$.
Since $k$ is perfect, there is a canonical decomposition of $G$
into its \'etale, multiplicative, and local-local subgroup schemes
\begin{equation}
		G= G^{\et}\times_k G^{\mult}\times_k G^{\loc},
	\label{ConEt}
\end{equation}
which is characterized by:
\begin{itemize}
	\item $G^{\et}$ is the maximal subgroup scheme of $G$ on which $F$ is
	an isomorphism.
	\item $G^{\mult}$ is the maximal subgroup scheme of $G$ on which $V$
	is an isomorphism.
	\item $G^{\loc}$ is the maximal subgroup scheme on which $F$ and $V$
	are both nilpotent (in the sense that $F^n=V^n=0$ for all $n$ sufficiently
	large).
\end{itemize}
For $\star\in \{\et,\mult,\loc\}$, 
the formation of $G^{\star}$ is functorial in $G$, and there are canonical 
identifications $(G^{\loc})^{\vee}\simeq (G^{\vee})^{\loc}$, 
$(G^{\mult})^{\vee}\simeq (G^{\vee})^{\et}$ and
$(G^{\et})^{\vee}\simeq (G^{\vee})^{\mult}$.

For a group $G$ of $p$-power order that is killed by $p$,
the nonnegative integers
\begin{equation*}
		f=f(G):=\log_p(\ord(G^{\et}))\qquad\text{and}\qquad
		a=a(G):=\dim_{k} \Hom_{\mathrm{gps}/k}(\alpha_p, G)
\end{equation*}
are called the {\bf $p$-rank} and {\bf $a$-number} of $G$, respectively,
where $\alpha_p=\Spec(k[X]/X^p)$ is the unique simple object in the category
of $p$-power order groups over $k$ which are killed by $p$
and are of local-local type.\footnote{In the definition of $a$-number, 
we view the $\Hom$ group as a left module over $\End(\alpha_p/k)=k$
in the obvious way.}
We say that $G$ is of {\bf $\alpha$-type}
if $G\simeq \alpha_p^m$ for some $m\ge 1$.

\begin{rem}\label{farems}
Let $G$ be a $k$-group of $p$-power order that is killed by $p$.
\begin{enumerate}
	\item  For {\em any} extension $k'/k$ contained in $\overline{k}$, one has 
	$a(G)=\dim_{k'}\Hom_{\mathrm{gps}/k'}(\alpha_p, G_{k'})$
	so the $a$-number is insensitive to algebraic extension of $k$.

	\item A finite $k$-group of $p$-power order that is killed by $p$
	 is of $\alpha$-type if and only if
	its relative Frobenius and Verscheibung morphisms are both zero.
	It follows easily from this that $G$ has a unique
	maximal subgroup of $\alpha$-type, which we denote by $G[F,V]$.
	We then have $G[F,V]\simeq \alpha_p^{a(G)}$, so 
	$a(G)$ is the largest integer $m$ for which there exists a 
	closed immersion $\alpha_p^m\hookrightarrow G$ of $k$-group schemes.
	\label{alphatype}

	\item As the inclusion $G^{\et}(\overline{k})\hookrightarrow G(\overline{k})$ is an equality,
	the $p$-rank of $G$ is the nonnegative integer $f$ for which
	$|G(\overline{k})| = p^f$, and one will often see the $p$-rank of $G$
	defined this way.

	\item In the special case that $G\simeq G^{\vee}$, one has 
	$G^{\mult}\simeq (G^{\vee})^{\mult}\simeq (G^{\et})^{\vee}$, so also
	\begin{equation*}
		f(G)=\log_p(\ord(G^{\mult})) = 
		\dim_{\F_p}\Hom_{\mathrm{gps}/\overline{k}}(\mu_p, G_{\overline{k}}).
	\end{equation*}
	 The right side of the equation above is often used as a definition
	of $f(G)$ in the literature, since then the definitions for $p$-rank and $a$-number
	look more alike.
\end{enumerate}
\end{rem}

Attached to $G$ is its (covariant) {\bf Dieudonn\'e module}, $\D(G)$, which
is a finite length $W(k)$-module equipped with a $\sigma$-semilinear
additive map $F\colon D\rightarrow D$
and a $\sigma^{-1}$-semilinear additive map $V\colon D\rightarrow D$ which satisfy $FV=FV=p$.
We view $\D(G)$ as a module over the {\bf Dieudonn\'e ring}: this
is the (generally) noncommutative ring $A_k$ generated over $W(k)$ by two indeterminates 
$F$ and $V$ which satisfy the relations $F\lambda=\sigma(\lambda) F$, $V\lambda=\sigma^{-1}(\lambda)V$
and $FV=VF=p$ for all $\lambda\in W(k)$.

If $D$ is {\em any} (left) $A_k$-module of finite $W(k)$-length, we define the {\bf dual}
of $D$ to be the $A_k$-module
$D^{\vee} := \Hom_{W(k)}(D,W(k)[1/p]/W(k))$ with\footnote{
For any $\tau\in \Aut(k)$ and any $\tau$-semilinear additive map $\Psi:D\rightarrow D$,
we define the {\em dual} of $\Psi$ to be the $\tau^{-1}$-semilinear additive map
$\Psi^{\vee}:D^{\vee}\rightarrow D^{\vee}$ whose value on any linear functional
$L\in D^{\vee}$ is the linear functional $\Psi^{\vee}(L):=\tau^{-1} \circ L\circ\Psi$.
}
$F_{D^{\vee}}:=V^{\vee}_D$ and $V_{D^{\vee}}=F^{\vee}_D$.
One checks that the expected double duality isomorphism $D^{\vee\vee}\simeq D$ (as left $A_k$-modules) holds.
The main theorem of classical Dieudonn\'e theory is:

\begin{thm}\label{DieudonneMain}
	The functor $G\rightsquigarrow \D(G)$ from the category of commutative finite $k$-group schemes
	of $p$-power order to the category of left $A_k$-modules of finite $W(k)$-length is an 	
	exact equivalence
	of abelian categories.  Moreover:
	\begin{enumerate}
		\item The order of $G$ is $p^v$ where $v=\len_{W(k)}\D(G)$.\label{ordercalc}
		
		\item There is a natural isomorphism of left $A_k$-modules $D(G^{\vee})\simeq D(G)^{\vee}$.

		\item The canonical decomposition $G= G^{\mult}\times G^{\et}\times G^{\loc}$
		corresponds to the canonical decomposition of $D:=\D(G)$
		\begin{equation*}
				D = D^{\et}\oplus D^{\mult}\oplus D^{\loc}
		\end{equation*}
		where $D^{\star}:=\D(G^{\star})$ for $\star\in \{\et,\mult,\loc\}$.  These $A_k$-submodules
		are characterized by: 
		\begin{itemize}
			\item $D^{\et}$ is the maximal submodule of $D$ on which $F$ is bijective.
			\item $D^{\mult}$ is the maximal submodule of $D$ on which $V$ is bijective.
			\item $D^{\loc}$ is the maximal submodule of $D$ on which both $F$ and $V$
			are nilpotent.
		\end{itemize}
		
		\item $\D(\alpha_p)=k$ with $F=V=0$.  In particular, if $G$ is killed by $p$
		then $\D(G[F,V])$ is identified with the intersection 
		$\ker F\cap \ker V\subseteq \D(G)$, so $a(G)=\dim_k(\ker F\cap \ker V)$.\label{alphacalc}
	\end{enumerate}
\end{thm}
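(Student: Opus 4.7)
The plan is to construct the Dieudonné functor as $\D(G) := \Hom_{k\text{-gps}}(G, CW_k)$, where $CW_k$ is the $k$-group functor of Witt covectors, and then to verify the five conclusions in the order (i) well-definedness into $A_k$-modules of finite $W(k)$-length, (ii) exactness, (iii) equivalence of categories, (iv) compatibility with the étale/multiplicative/local-local decomposition and with Cartier duality, and (v) the length and $a$-number formulae. The functor $CW_k$ assigns to a $k$-algebra $R$ the group of sequences $(a_i)_{i\le 0}$ with $a_i\in R$ almost all nilpotent, together with commuting operations of $W(k)$-scaling, Frobenius $F$, and Verschiebung $V$; these induce on $\D(G)$ the structure of a left $A_k$-module satisfying $FV=VF=p$. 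Finite $W(k)$-length follows because $G$ is killed by some $p^n$, so any morphism $G\to CW_k$ factors through the finite-length submodule $CW_k[p^n]$.

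The heart of the argument is exactness and the equivalence. Exactness of $\Hom(-,CW_k)$ on short exact sequences of finite $k$-groups of $p$-power order amounts to showing that $CW_k$ is an injective cogenerator in the relevant category; I would prove this either by direct inspection on extensions or by exhibiting $CW_k$ as a filtered colimit of the finite Witt group schemes $W_n$ and applying Cartier duality. To prove the equivalence, I would exploit the decomposition $G = G^{\et}\times G^{\mult}\times G^{\loc}$: the étale case reduces, after passage to $\bar k$, to the classical comparison between finite continuous $\Gal(\bar k/k)$-modules of $p$-power order and $A_k$-modules on which $F$ is bijective; the multiplicative case follows by applying Cartier duality to the étale case; and the local-local case, which I expect to be the main obstacle, I would handle by checking the equivalence first on the unique simple local-local object $\alpha_p$ and then propagating by dévissage along Jordan–Hölder filtrations using the exactness established in the previous step. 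The natural isomorphism $\D(G^\vee)\simeq \D(G)^\vee$ of part (2) follows from the self-duality of $CW_k$ under Cartier duality together with the interchange of $F$ and $V$ that this duality induces.

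The remaining assertions fall out once the equivalence is in hand. For (1), dévissage along composition series reduces the length formula to the three simple objects $\Z/p\Z$, $\mu_p$, $\alpha_p$, each of order $p$ and each with Dieudonné module of $W(k)$-length $1$ (with $F$ bijective and $V=0$, with $V$ bijective and $F=0$, and with $F=V=0$, respectively). For (3), the three intrinsic characterizations of $G^{\et}$, $G^{\mult}$, $G^{\loc}$ in terms of bijectivity or nilpotence of $F$ and $V$ match the analogous intrinsic characterizations on $\D(G)$ by functoriality, so the equivalence automatically preserves the decomposition. For (4), a direct computation on $\alpha_p=\Spec k[X]/X^p$ exhibits $\D(\alpha_p)=k$ with $F=V=0$, whereupon the $a$-number formula follows from the chain of identifications $\Hom_{\mathrm{gps}/k}(\alpha_p,G)\simeq \Hom_{A_k}(\D(\alpha_p),\D(G))=\Hom_{A_k}(k,\D(G))\simeq \ker F\cap \ker V\subseteq \D(G)$, the last isomorphism because specifying an $A_k$-linear map from $k$ to $\D(G)$ amounts to choosing the image of $1$ in the joint kernel of $F$ and $V$.
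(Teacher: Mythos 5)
The paper does not actually prove this theorem: it is quoted as the main theorem of classical (covariant) Dieudonn\'e theory, with the proof delegated to the cited references (Demazure, Fontaine, Grothendieck, Tate). So there is no in-paper argument to compare against; what you have written is a compressed reconstruction of the standard proof via Fontaine's Witt covectors, which is precisely one of the sources the paper points to, and in outline (injectivity/cogenerator property of $CW_k$ via the finite Witt group schemes, \'etale case by Galois descent over $\bar k$, multiplicative case by Cartier duality, local-local case by d\'evissage from $\alpha_p$, then the length and $a$-number formulae by reduction to the three simples) it is the right skeleton.

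Two points deserve attention. First, a variance slip: $G\mapsto \Hom_{k\text{-gps}}(G,CW_k)$ is the \emph{contravariant} Dieudonn\'e module, whereas the theorem (and everything downstream in the paper) uses the covariant functor. Several of your later steps silently use covariance: full faithfulness is invoked in the form $\Hom_{\mathrm{gps}/k}(\alpha_p,G)\simeq \Hom_{A_k}(\D(\alpha_p),\D(G))$, and in part (4) a closed immersion $G[F,V]\hookrightarrow G$ is made to correspond to the \emph{submodule} $\ker F\cap\ker V\subseteq \D(G)$; under your functor as defined these arrows reverse (one would instead land on a quotient such as $\D(G)/(F\D(G)+V\D(G))$), and part (2) would likewise need restating. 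The standard repair is to set $\D(G):=\Hom_{k\text{-gps}}(G^{\vee},CW_k)$ (or to postcompose with the module duality $\Hom_{W(k)}(-,W(k)[1/p]/W(k))$), after which $F$ and $V$ interchange in the expected way and your chain of identifications is correct. Second, the d\'evissage in the local-local case is thinner than it looks: exactness plus knowing the functor on the single simple object $\alpha_p$ does not by itself give fullness or essential surjectivity on objects of higher length; one must also compare extensions (injectivity/surjectivity on $\mathrm{Ext}^1$ between simples), or argue as in the references by embedding every local-local group into (products of) truncated Witt group schemes $W_{n,m}$, which is exactly where the injectivity of $CW_k$ earns its keep. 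With those two repairs your sketch matches the classical argument the paper is implicitly relying on.
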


By definition, a {\bf $p$-divisible $(=$ Barsotti-Tate$)$ group over $k$ of height $h$}
is an inductive system $G:=\{(G_{v},i_v)\}_{v\ge 0}$ of finite $k$-group
schemes, with $G_v$ of order $p^{hv}$, such that 
\begin{equation}
	\xymatrix{
		0\ar[r] & {G_v} \ar[r]^-{i_v} & {G_{v+1}} \ar[r]^-{p^v} & {G_{v+1}} \ar[r] & 0
		}\label{pDivDef}
\end{equation}
is an exact sequence of $k$-group schemes for all $v\ge 0$.  The two prototypical examples
are $\Q_p/\Z_p:=(\Z/p^v\Z,i_v)$ and $\mu_{p^{\infty}}:=(\mu_{p^v},i_v)$,
with $i_v$ the obvious closed immersions in each case.  We will be primarily interested
in the $p$-divisible groups associated to abelian varieties: If $A/k$
is an abelian variety, then $A[p^{\infty}]:=(A[p^v],i_v)$ is naturally a $p$-divisible
group, with $i_v$ the canonical closed immersion.
We recall that the {\bf dual} of $G$ is by definition
the inductive system $G^{\vee}:=(G_v^{\vee},j_v^{\vee})$
with $j_v\colon G_{v+1}\rightarrow G_v$ the unique map\footnote{This map exists
as the exactness of (\ref{pDivDef}) forces the multiplication by $p$ map
on $G_{v+1}$ to factor through $i_v$.} satisfying $i_v\circ j_v = p$.
The double-duality isomorphisms at finite level compile to give a canonical
isomorphism of $p$-divisible groups $G^{\vee\vee}\simeq G$ over $k$.
Similarly, the decomposition (\ref{ConEt}) induces a corresponding splitting
$G = G^{\et}\times_k G^{\mult} \times_k G^{\loc}$
with each $G^{\star}$ the inductive system of the $G_v^{\star}$,
and we say that $G$ is \'etale if $G=G^{\et}$ and so forth.

Furthermore, there are natural identifications
$(G^{\loc})^{\vee} = (G^{\vee})^{\loc}$,
$(G^{\mult})^{\vee} \simeq (G^{\vee})^{\et}$ and
$(G^{\et})^{\vee} \simeq (G^{\vee})^{\mult}$
induced by the ones at finite level.

A {\bf principal quasi-polarization} of a $p$-divisible group $G$ is an isomorphism 
$
\lambda\colon G\xrightarrow{\simeq}  G^{\vee}
$
with the property that the composition $G^{\vee\vee}\rightarrow G^{\vee}$
of the canonical double duality map $G^{\vee\vee}\simeq G$ with $\lambda$

coincides with $-\lambda^{\vee}$.  We say that $G$ is {\bf principally quasi-polarized},
or {\bf pqp} for short, if it is endowed with a principal quasi-polarization.
If $G=A[p^{\infty}]$ for an abelian variety $A$, then any polarization of degree prime to $p$
on $A$ induces a principal quasi-polarization on $G$.

The {\bf Dieudonn\'e module} of a $p$-divisible group $G:=\{(G_v,i_v)\}$ is
by definition the $A_k$-module $\D(G):=\varprojlim_{j_v} \D(G_v)$.

It follows easily from Theorem \ref{DieudonneMain} (\ref{ordercalc}) and definitions that 
$\D(G)$ is free of rank $h$ as a $W(k)$-module.
If $D$ is {\em any} (left) $A_k$-module that is $W(k)$-finite and free, we define the {\bf dual}
of $D$ to be the $A_k$-module
$D^{\vee}:=\Hom_{W(k)}(D,W(k))$ with $F_{D^{\vee}}:=V_D^{\vee}$
and $V_{D^{\vee}}:=F_D^{\vee}$, and we define the {\bf $p$-rank and $a$-number} of $G$
to be the corresponding $p$-rank and $a$-number of $G[p]:=G_1$.
From Theorem \ref{DieudonneMain} we obtain:

\begin{thm}\label{pdivequiv}
	The functor $G\rightsquigarrow \D(G)$ from the category of $p$-divisible groups
	over $k$ to the category of left $A_k$-modules which are finite and free over $W(k)$
	is an equivalence of categories.
	If $G=\{(G_v,i_v)\}$ is a $p$-divisible group of height $h$ with Dieudonn\'e module
	$D=\D(G)$ then:
	\begin{enumerate}
		\item $D$ is a free $W(k)$-module of rank $h$, and $F$, $V$
		uniquely determine each other.
				
		\item There is a natural isomorphism of left $A_k$-modules 
		$\D(G^{\vee})\simeq D^{\vee}$.

		
		\item The canonical decomposition $G= G^{\mult}\times G^{\et}\times G^{\loc}$
		corresponds to the canonical decomposition of $D$
		\begin{equation*}
				D = D^{\et}\oplus D^{\mult}\oplus D^{\loc}
		\end{equation*}
		where $D^{\star}:=\D(G^{\star})$ for $\star\in \{\et,\mult,\loc\}$. These $A_k$-submodules
		are characterized by
		\begin{itemize}
			\item $D^{\et}$ is the maximal submodule of $D$ on which $F$ is bijective.
			\item $D^{\mult}$ is the maximal submodule of $D$ on which $V$ is bijective.
			\item $D^{\loc}$ is the maximal submodule of $D$ on which $F$ and $V$
			are topologically nilpotent.
		\end{itemize}
		
		\item $G$ is pqp if and only if
		there exists a symplectic $(=$ perfect, bilinear, alternating$)$ form
		\begin{equation*}
			\xymatrix{
			{\psi_G \colon D\times D} \ar[r] & {W(k)}
			}\quad\text{satisfying}\quad
			\psi_G(Fx,y) = \sigma(\psi_G(x,Vy)).\label{symplectic}
		\end{equation*}
	
		\item Set $\overline{D}=D/pD=\D(G[p])$ and let $\overline{F}:=F\bmod p$ and $\overline{V}:=V\bmod p$.
		Then the $p$-rank of $G$ is the ``infinity rank" of $\overline{F}$, i.e.
		the $k$-dimension of the maximal subspace of $\overline{D}$ on which $\overline{F}$
		is bijective.  The $a$-number of $G$ is the $k$-dimension of
		$\ker \overline{F}\cap \ker \overline{V}$.
	\end{enumerate}
\end{thm}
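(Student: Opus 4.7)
The plan is to bootstrap from Theorem \ref{DieudonneMain} by passing to inverse limits. Given a $p$-divisible group $G=\{(G_v,i_v)\}$ of height $h$, each $\D(G_v)$ has $W(k)$-length $hv$ by part \ref{ordercalc} of Theorem \ref{DieudonneMain}, and the transition maps $\D(j_v)\colon \D(G_{v+1})\to\D(G_v)$ are surjective with kernel of length $h$ (applying the exact functor $\D$ to the short exact sequence of finite group schemes relating $G_v$, $G_{v+1}$, and $G_1$). A standard Mittag-Leffler/Nakayama argument then shows that $\D(G) := \varprojlim \D(G_v)$ is $p$-adically complete with $\D(G)/p^v\D(G) \simeq \D(G_v)$, hence free of rank $h$ over $W(k)$, proving (1); on the $p$-torsion-free module $\D(G)$, $F$ and $V$ each determine the other via $FV=VF=p$. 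For the equivalence of categories, given any finite free $A_k$-module $D$, one defines $G_v$ via Theorem \ref{DieudonneMain} applied to $D/p^v D$, with the $i_v$'s induced functorially; exactness of the finite-level equivalence ensures that (\ref{pDivDef}) remains exact, giving the quasi-inverse.

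Assertions (2) and (3) follow formally from their finite-level analogues in Theorem \ref{DieudonneMain}. For duality, $\D(G^\vee) = \varprojlim \D(G_v^\vee) \simeq \varprojlim \D(G_v)^\vee$ is compared with $\Hom_{W(k)}(\D(G), W(k))$ via the compatibility of dualization with the $p$-adic completion computed above. For the canonical decomposition, the finite-level splittings assemble in the inverse limit into a direct sum of $A_k$-submodules of $D$, with the characterization of $D^\loc$ via \emph{topological} nilpotence of $F$ and $V$ reflecting that $F,V$ are genuinely nilpotent on each finite-level quotient $\D(G_v^\loc)$.

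The most delicate assertion, and the primary technical hurdle, is (4). A principal quasi-polarization $\lambda\colon G \xrightarrow{\sim} G^\vee$ corresponds under $\D$ to an $A_k$-linear isomorphism $\psi\colon D \xrightarrow{\sim} D^\vee$, which by evaluation produces a perfect $W(k)$-bilinear form $\psi_G(x,y) := \psi(x)(y)$. The condition that $\lambda$ be self-dual up to sign translates, using that $\D$ respects double duality (assertion (2)), into the alternating property $\psi_G(y,x) = -\psi_G(x,y)$. The $A_k$-linearity of $\psi$ amounts to $\psi \circ F = F_{D^\vee} \circ \psi = V_D^\vee \circ \psi$; unpacking this using the semilinear-dual formula $V_D^\vee(L)(y) = \sigma(L(Vy))$ from the footnote preceding Theorem \ref{DieudonneMain} yields exactly $\psi_G(Fx, y) = \sigma(\psi_G(x, Vy))$. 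The computation is mechanical but requires careful bookkeeping of semilinearities, and the converse follows by reversing each step to obtain an $A_k$-linear isomorphism $D\to D^\vee$ of the appropriate symmetry type, hence a principal quasi-polarization via the equivalence.

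Finally, (5) is immediate from the finite-level results applied to $G[p] = G_1$, whose Dieudonn\'e module is $\overline{D} := \D(G)/p\D(G) = \D(G_1)$. The $a$-number formula $a(G) = \dim_k(\ker \overline{F} \cap \ker \overline{V})$ is exactly part \ref{alphacalc} of Theorem \ref{DieudonneMain}, while the $p$-rank equals $\log_p \ord(G_1^\et) = \dim_k \overline{D}^\et$ by part \ref{ordercalc}, and by the characterization of $\overline{D}^\et$ in the finite-level version of (3), this subspace is precisely the maximal one on which $\overline{F}$ is bijective, i.e., the infinity rank of $\overline{F}$ on $\overline{D}$.
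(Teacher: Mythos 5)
Your proposal is correct and is essentially the argument the paper intends: the paper gives no proof of Theorem~\ref{pdivequiv}, presenting it as a standard consequence of Theorem~\ref{DieudonneMain} (via exactly the inverse-limit bootstrap you describe) and deferring details to the references cited in \S\ref{s:background}. The one point worth a word in part (4) is that anti-self-duality of $\lambda$ literally yields skew-symmetry of $\psi_G$, which gives the alternating condition only because $W(k)$ is torsion-free (so the characteristic-$2$ issue of Remark~\ref{pqprem} does not arise at the level of $W(k)$-valued forms); with that noted, your sketch is sound.
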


In view of Theorems \ref{DieudonneMain} and \ref{pdivequiv}, we will often abuse terminology
and say that a Dieudonn\'e module has some property if the corresponding
$p$-divisible group or finite group scheme has this property, and vice-versa.

We will be interested in the category $\BT_1$ of commutative group schemes $G$ over $k$
of $p$-power order that are killed by $p$ which arise as the $p$-torsion in a Barsotti-Tate
group over $k$.  This is equivalent to the condition that the sequence
\begin{equation} 
	\xymatrix{
		G \ar[r]^-{F_G} & {G^{(p)}} \ar[r]^-{V_G} & {G}
	}\label{BTcondition}
\end{equation}
is exact.  An object of $\BT_1$ is called a {\bf truncated Barsotti-Tate group
of level $1$}, or a $\BT_1$ for short.  We will write $\DBT_1$ for the
category of finite $\F_q$-vector spaces equipped with semilinear additive maps
$F$ and $V$ satisfying $FV=VF=0$ as well as $\im(F)=\ker(V)$ and $\ker(F)=\im(V)$.
We note that the Dieudonn\'e module functor restricts to an equivalence of
categories $\BT_1\rightarrow \DBT_1$, and that, for example,
$\mu_p$ and $\Z/p\Z$ are $\BT_1$'s whereas $\alpha_p$ is not.
A {\bf principal quasi-polarization of a $\BT_1$}
is a homomorphism $\lambda\colon G\rightarrow G^{\vee}$ with the property that the induced
bilinear form $\psi\colon\D(G)\times \D(G)\rightarrow k$ on the Dieudonn\'e module of $G$ is symplectic.
A {\bf pqp $\DBT_1$} is simply the Dieudonn\'e module of a principally quasi-polarized $\BT_1$.

\begin{rem}\label{pqprem}
The condition that $\psi$ is symplectic implies that $\lambda$
is an anti-selfdual isomorphism, {\em i.e.} that $\lambda^{\vee}$
coincides with $-\lambda$ via the double duality identification $G^{\vee\vee}\simeq G$.
If $\mathrm{char}(k)\neq 2$, then these two conditions are in fact equivalent.
In characteristic 2, however, there exist anti-selfdual isomorphisms
$\lambda\colon G\rightarrow G^{\vee}$ which {\em do not} induce a symplectic form
on the Dieudonn\'e module, basically because there are (skew) symmetric forms
in characteristic 2 which are not alternating.  In general, if $\widetilde{G}$
is a $p$-divisible group over $k$ prolonging $G$, then any principal
quasi-polarization on $\widetilde{G}$ induces a principal quasi-polarization on $G$.
Our definition of a principal quasi-polarization on a $\BT_1$ is identical to that found in
\cite[\S2.6]{moonen:groupSchemes} and \cite[\S9.2]{oort:stratification}.

\end{rem}

\section{Statistics of random Dieudonn\'{e} modules}

\label{sec:randompdiv}

In this section we define a probability distribution on the isomorphism classes of pqp 
Dieudonn\'e modules $(D,F,V,\omega)$ of rank $2g$ over $\Z_q$.
To this end, we fix $D=\Z_q^{2g}$, with standard basis 
$\{e_1,\ldots,e_{2g}\}$ and corresponding standard symplectic form
$\omega$, and we will think of $F$ and $V$ as the entities to be chosen randomly. We will say that a $\sigma$-semilinear endomorphism
$F\colon D\rightarrow D$ is {\bf $p$-autodual} if there exists 
a $\sigma^{-1}$-semilinear endomorphism $V\colon D\rightarrow D$
such that the quadruple $(D,F,V,\omega)$ is a pqp Dieudonn\'e module,
or equivalently if $FV=VF=p$ and $\omega(Fx,y)=\sigma(\omega(x,Vy))$ for every $x,y \in D$.
Observe that $V$ is uniquely determined by $F$, if it exists.

For simplicity of notation we say ``$\sigma$-endomorphism" (resp. ``$\sigma$-automorphism") to mean ``$\sigma$-semilinear endormorphism" (resp. ``$\sigma$-semilinear automorphism").
Suppose given a $p$-autodual endomorphism $F$ of $D$.
We note first of all that $FD \subset D$ is a subgroup of index $q^g$
containing $pD$; in particular, the quotient $FD/pD$ is a subgroup of $D/pD$  isomorphic to $(\F_q)^g$.  For any $x,y \in D$, we have
\beq
\omega(Fx,Fy) = \sigma(\omega(x,VFy)) = p\sigma(\omega(x,y))
\eeq
so that $FD/pD$ is an isotropic (whence maximal isotropic) subspace of the symplectic $\F_q$-vector space $D/pD$.  We denote $D/pD$ by $W$.

Let $F_0$ be a $p$-autodual endomorphism of $D$.  We define $\FF(D)$ to be the double coset $\Sp(D) F_0 \Sp(D)$, where $\Sp(D)$ is the group of ($\Z_q$-linear) symplectic similitudes.   Then $\FF(D)$ is endowed with a probabilty measure by pushforward from Haar measure on the group $\Sp(D) \times \Sp(D)$.  Note that every element $F$ of $\FF(D)$ is in fact a $p$-autodual $\sigma$-endomorphism of $D$.  We now show that {\em all} $p$-autodual $\sigma$-endomorphisms arise in this way, which means that $\FF(D)$ is independent of our original choice of $F_0$.

\begin{prop} $\FF(D)$ is the set of $p$-autodual $\sigma$-endomorphisms of $D$.
\end{prop}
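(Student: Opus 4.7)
The plan is to establish the nontrivial inclusion, namely that every $p$-autodual $\sigma$-endomorphism lies in the double coset $\Sp(D) F_0 \Sp(D)$. The reverse inclusion was already recorded in the paragraph preceding the statement: a direct check shows that if $A, B \in \Sp(D)$ and $F_0$ has companion $V_0$, then $A F_0 B$ is $p$-autodual with companion $V = B^{-1} V_0 A^{-1}$. Given an arbitrary $p$-autodual $\sigma$-endomorphism $F$, I would construct $A, B \in \Sp(D)$ with $F = A F_0 B$ in two stages: first multiply on the left by some $A \in \Sp(D)$ to arrange that $F$ and $F_0$ have the same image, then recognize the remaining ambiguity as right multiplication by some $B \in \Sp(D)$.

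For the first stage, recall from the paragraph preceding the proposition that both $FD/pD$ and $F_0D/pD$ are Lagrangian subspaces of the symplectic $\F_q$-vector space $W = D/pD$. By Witt's theorem, $\Sp_{2g}(\F_q)$ acts transitively on Lagrangians in $W$, and the reduction map $\Sp(D) \twoheadrightarrow \Sp(W)$ is surjective (by smoothness of $\Sp$ over $\Z_q$, or by explicit Hensel lifting of symplectic generators). Choose $A \in \Sp(D)$ whose reduction mod $p$ sends $F_0D/pD$ onto $FD/pD$; since both sublattices contain $pD$, this already forces $A(F_0D) = FD$. Replacing $F$ by $A^{-1}F$---still $p$-autodual, because $A^{-1} \in \Sp(D)$, with companion $VA$---we may assume $FD = F_0 D =: L$.

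For the second stage, observe that $VF = p$ together with the $p$-torsion-freeness of $D$ forces both $F$ and $F_0$ to be injective as abelian-group endomorphisms; hence each restricts to a $\sigma$-semilinear bijection $D \iso L$. I would then set $B := F_0^{-1} \circ F \colon D \to D$; this is a well-defined bijection whose two semilinearity exponents combine as $\sigma^{-1} \circ \sigma = \id$, so $B$ is $\Z_q$-linear. To verify $B \in \Sp(D)$, apply the identity $\omega(Gu, Gv) = \sigma(\omega(u, VGv)) = p\sigma(\omega(u,v))$ (valid for any $p$-autodual $G$) to both $F$ and $F_0$, and use $F_0 B = F$:
\[
p\sigma(\omega(Bx, By)) = \omega(F_0 B x, F_0 B y) = \omega(Fx, Fy) = p\sigma(\omega(x, y)),
\]
whence $\omega(Bx, By) = \omega(x, y)$, so $B \in \Sp(D)$ and $F = A F_0 B$, as required.

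The main obstacle is the first stage, which invokes two classical inputs beyond the setup in the excerpt: Witt-style transitivity of $\Sp(\F_q)$ on Lagrangians of $W$, and surjectivity of the reduction map $\Sp(\Z_q) \twoheadrightarrow \Sp(\F_q)$. Once these are granted, the second stage is a short bookkeeping calculation that rests only on the injectivity of $F$ and the derived identity $\omega(Fu, Fv) = p\sigma(\omega(u,v))$.
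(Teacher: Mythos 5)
Your proposal is correct and follows essentially the same route as the paper's proof: use Witt's theorem for $\Sp(W)$ plus surjectivity of $\Sp(D)\rightarrow\Sp(W)$ to match the images $FD$ and $F_0D$ (noting both contain $pD$), then observe that the resulting composition of $F$ with the inverse of the other map is a $\Z_q$-linear automorphism preserving $\omega$. The only difference is bookkeeping -- the paper forms $F^{-1}gF_0$ directly where you left-translate and form $F_0^{-1}F$ -- and your explicit verification that this composition preserves $\omega$ via $\omega(Fx,Fy)=p\sigma(\omega(x,y))$ fills in a step the paper leaves implicit.
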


\begin{proof}
Let $F$ be a $p$-autodual $\sigma$-endomorphism of $D$.
Then $FD/pD$ is a maximal isotropic subspace of $W$.  By the
symplectic version of Witt's
Theorem ~\cite{Artin:geometricAlgebra}*{Theorem 3.9}, $\Sp(W)$ acts
transitively on the maximal isotropic subspaces and since $\Sp(D)$
surjects onto $\Sp(W)$, we can choose $g \in \Sp(D)$ such that $g F_0
D / pD = F D / pD$.  Since $FD$ and $g F_0 D$ both contain $pD$, these
two subgroups of $D$ are actually equal.  Now $F$ induces an isomorphism from $D$ to $FD$, so its inverse $F^{-1}$ can be thought of as an isomorphism from $FD$ to $D$.  Thus the composition $F^{-1} g F_0$ is a $\Z_q$-linear automorphism of $D$ which preserves $\omega$; i.e., it is an element $g'$ of $\Sp(D)$.  This shows that $F$ lies in $\Sp(D) F_0 \Sp(D)$, as claimed.
\end{proof}

By a {\bf random pqp Dieudonn\'{e} module $(D,F,V,\omega)$ of
  dimension $2g$} we mean one in which $F$ is chosen randomly from
$\FF(D)$ with respect to the above probability measure, and $V$ is determined from $F$.  When there is no danger of confusion we denote $(D,F,V,\omega)$ simply by $D$.

  If $X$ is a statistic of Dieudonn\'{e} modules (such as $a$-number, or dimension of local-local part) we denote by $\EE_g(X)$ the expected value of $X(D)$ where $D$ is a random pqp Dieudonn\'{e} module of rank $2g$.  The statistics of interest to us are those where $\EE_g(X)$ approaches a limit as $g \ra \infty$; in this case we denote the limit by $\EE(X)$ and refer to it as the expected value of $X$ for a random pqp Dieudonn\'{e} module, the ``large $g$ limit" being understood.  If $P$ is a true-or-false assertion about Dieudonn\'{e} modules, the ``probability that a random Dieudonn\'{e} module satisfies $P$" is understood to mean the expected value of the Bernoulli variable which is $1$ when $P$ holds and $0$ otherwise.

We define $\overline{\FF}(D)$ to be the reduction of $\FF(D)$ modulo $p$, with
its inherited probability distribution.  In other words, if
$\overline{F}_0$ is the reduction of $F_0$ to a $\sigma$-endomorphism of
$W$, then $\overline{\FF}(D)$ is $\Sp(W) \overline{F}_0 \Sp(W)$, with the
probability distribution obtained by pushforward from the Haar measure
(i.e., the counting measure) on $\Sp(W) \times \Sp(W)$.  If $F$ is an
element of $\overline{\FF}(D)$, then $F$ is a $\sigma$-endomorphism of $W$
whose kernel is a maximal isotropic subspace of $W$.  The reduction
$\overline{\omega}$ of $\omega$ provides an isomorphism $\lambda:W\rightarrow W^{\vee}$ between $W$ and its
$\F_q$-linear dual, and we set $V:=\lambda^{-1}\circ F^{\vee}\circ\lambda$.
Note $V$ is then a $\sigma^{-1}$-endomorphism of $W$ satisfying $VF=FV = 0$, 
and that in fact $(W,F,V,\overline{\omega})$ is a pqp $\DBT_1$.

By a  {\bf random pqp $\DBT_1$} we mean a $\DBT_1$ $(W,F,V)$, endowed with the sympletic form $\overline{\omega}$, obtained as above by choosing a random element of $\overline{\FF}(D)$.  Statistics of random pqp $\DBT_1$ are again understood to be computed in the large $g$ limit.

In this paper, we will restrict our attention almost entirely to
invariants of $p$-divisible groups which depend only on the associated
$\DBT_1$, such as $a$-number and $p$-corank.  It should certainly be
possible to compute the statistics of more refined invariants
(e.g., Newton polygon) but with the aim of avoiding ungrounded
speculation in the context of abelian varieties, we have mostly restricted ourselves to invariants for
which we have collected substantial experimental data on Jacobians on curves.

The kernel and image of $F$ are maximal isotropic subspaces of $W$ of
dimension $g$; we denote by $F'$ the $\sigma$-isomorphism from $W/\ker
F$ to $FW$ induced by $F$.  The following proposition provides a
useful description of a random  pqp $\DBT_1$ in terms of $\ker F, \im
F$, and $F'$.

\begin{prop}  Let $(W,F,V,\overline{\omega})$ be a random pqp $\DBT_1$.  Then $(\ker F,\im F)$ is uniformly distributed on the set of pairs of maximal isotropic subspaces of $W$, and $F'$ is uniformly distributed among $\sigma$-isomorphisms from $(W / W_1)$ to $W_2$.

\label{pr:uniformf}
\end{prop}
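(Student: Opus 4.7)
My plan is to establish an explicit bijection between $\overline{\FF}(D)$ and the set of triples $(W_1, W_2, \phi)$, with $W_1, W_2$ maximal isotropic subspaces of $W$ and $\phi\colon W/W_1\to W_2$ a $\sigma$-semilinear isomorphism, and then to check that under this bijection the probability measure on $\overline{\FF}(D)$ corresponds to the uniform measure on triples. Uniform on triples immediately yields both statements of the proposition, because the projection from triples to pairs $(W_1,W_2)$ has fibers of constant cardinality $|\GL_g(\F_q)|$ (the number of $\sigma$-semilinear isomorphisms between any two $g$-dimensional $\F_q$-spaces).

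First, I claim the measure on $\overline{\FF}(D)$ is itself the uniform (counting) measure. By definition $\overline{\FF}(D)=\Sp(W)\overline{F}_0\Sp(W)$ is a single orbit for the action $(g,h)\cdot X := gXh$ of $\Sp(W)\times\Sp(W)$, and for any transitive action on a finite set all point-stabilizers are conjugate and therefore of equal cardinality. The pushforward of Haar/counting measure on $\Sp(W)\times\Sp(W)$ therefore assigns equal mass to every element of $\overline{\FF}(D)$.

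To define the bijection, send $F\in\overline{\FF}(D)$ to $(\ker F,\im F, F')$. Both $\ker F$ and $\im F$ are maximal isotropic subspaces of $W$: $\ker F$ as noted in the excerpt preceding the proposition, and $\im F$ by the analogous computation $\omega(Fx,Fy)=\sigma\omega(x,VFy)=0$ (equivalently, $\im F = g\cdot\im\overline{F}_0$ with $g\in\Sp(W)$). The map is injective since $F$ factors as $W\twoheadrightarrow W/\ker F\xrightarrow{F'}\im F\hookrightarrow W$. For surjectivity, given a triple $(W_1,W_2,\phi)$, let $F$ be this same composite and produce $g,h\in\Sp(W)$ with $g\overline{F}_0 h = F$ as follows. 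Write $L_1:=\ker\overline{F}_0$, $L_2:=\im\overline{F}_0$, and let $\phi_0\colon W/L_1\to L_2$ denote the induced $\sigma$-iso of $\overline{F}_0$. Since $\omega$ vanishes identically on $W_1$ and on $L_1$, any $\F_q$-linear isomorphism $W_1 \iso L_1$ is vacuously an isometry, so by the symplectic version of Witt's theorem it extends to some $h\in\Sp(W)$; then $\overline{F}_0 h$ has kernel $W_1$, image $L_2$, and induced map $\phi_0\circ\overline{h}\colon W/W_1\to L_2$, where $\overline{h}\colon W/W_1\to W/L_1$ is the $\F_q$-linear isomorphism induced by $h$. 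A second application of Witt's theorem to the isotropic pair $L_2, W_2$ produces $g\in\Sp(W)$ whose restriction $g|_{L_2}\colon L_2\to W_2$ equals the $\F_q$-linear isomorphism $\phi\circ(\phi_0\circ\overline{h})^{-1}$. The composite $g\overline{F}_0 h$ then has the same kernel, image, and induced $\sigma$-iso as $F$, and hence coincides with $F$.

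I anticipate the main obstacle is the two applications of Witt's extension theorem in the surjectivity step; the remainder is a matter of tracking the measures through the bijection, since uniform on $\overline{\FF}(D)$ transports to uniform on triples, which in turn disintegrates (because all fibers of the projection to pairs have size $|\GL_g(\F_q)|$) as the uniform distribution on pairs of maximal isotropic subspaces together with, conditionally on each pair, the uniform distribution on $\sigma$-isomorphisms $W/W_1\to W_2$.
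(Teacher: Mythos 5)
Your proof is correct, and it takes a more constructive route than the paper's. The paper argues purely by equivariance: the distribution on $\overline{\FF}(D)$ is invariant under the $\Sp(W)\times\Sp(W)$ action, which acts transitively on pairs of maximal isotropics (first assertion); then it conditions on $(\ker F,\im F)=(W_1,W_2)$ and observes that the conditioned set is invariant under the stabilizer of $W_2$ in $\Sp(W)$, which acts on the possible $F'$ through $\GL(W_2)$ transitively (second assertion). You instead make the measure explicit (uniform counting measure on $\overline{\FF}(D)$, via constant fiber size of the pushforward map), identify $\overline{\FF}(D)$ with the full set of triples $(W_1,W_2,\phi)$ by a two-fold application of the Witt extension theorem for Lagrangians, and then disintegrate the uniform measure on triples over pairs using the constant fiber count $|\GL_g(\F_q)|$. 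Both arguments ultimately rest on the same symmetry input (the symplectic Witt theorem), but yours buys something extra: the surjectivity step, which the paper gets only implicitly as a consequence of invariance plus transitivity, is proved directly, and in doing so you also establish the bijection between $\overline{\FF}(D)$ and triples that the paper asserts without proof in its later groupoid discussion (``$S$ is in bijection with the set of triples $(W_1,W_2,F')$''). The paper's version is shorter and avoids any explicit construction; yours is slightly longer but self-contained and makes the uniform-on-triples picture, which is what all the subsequent statistical computations actually use, completely explicit.
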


\begin{proof}  
The action of $\Sp(W) \times \Sp(W)$ is transitive on pairs of maximal isotropic subspaces, and the probability distribution on $\overline{\FF}(D)$ is invariant under this action; this gives the first assertion.  Now suppose that we condition on $\ker F = W_1$ and $\im F = W_2$; let $\overline{\FF}(D,W_1,W_2)$ be the subset of $\overline{\FF}(D)$ satisfying this condition.  Then $\overline{\FF}(D,W_1,W_2)$ is still invariant under left multiplication by the subgroup of $\Sp(W)$ preserving $W_2$; this subgroup is in fact isomorphic to $\GL(W_2)$ and permutes the choices of $F'$ transitively.  This yields the second assertion.

\end{proof}

The definitions given here may seem somewhat unsatisfactory;  our ``random $\DBT_1$" is in some sense more like ``a random $\DBT_1$ with a choice of $\Z_q$-basis."  We show below that our definition conforms with a more intrinsic definition of random $\DBT_1$. The groupoid formalism used here will not return again until the proof of Proposition~\ref{P:connComp}.

\begin{defn}  Let $G$ be a finite groupoid; that is, $G$ is a groupoid with finitely many isomorphism classes of objects and finite Hom sets.  The {\em uniform distribution} on $G$ is the unique distribution on isomorphism classes of objects whose mass on an isomorphism class $c$ is inversely proportional to the number of automorphisms of an object in $c$.  We say a finite set of objects $S$ in $G$ is {\em uniformly distributed} in $G$ if the probability that a random element of $S$ lies in an isomorphism class $c$ is given by uniform measure.
\label{de:uniformgroupoid}
\end{defn}

The desirability of counting objects with weights inversely proportional to the size of their automorphism group has been known at least since Siegel's mass formula; as regards general groupoids we learned the formalism from Baez and Dolan.

It is clear that an equivalence between groupoids $G_1$ and $G_2$ carries uniform measure on $G_1$ to uniform measure on $G_2$.  (This is just the groupoid version of the fact that a bijection of sets transports counting measure from one to the other.)  Similarly, if $S$ is a finite set with a {\em free} action of a group $\Gamma$, the pushforward from $S$ to $S/\Gamma$ of uniform measure on $S$ is uniform measure on $S/\Gamma$.  The following easy proposition records the fact that the same is true in the groupoid setting.

\begin{prop}  Let $S$ be a finite set, let $\Gamma$ be a finite group acting on $S$, and let $S/\Gamma$ be the groupoid whose objects are $S$ and whose morphisms from $s$ to $s'$ are group elements $\gamma$ in $\Gamma$ such that $\gamma \cdot s = s'$.  Then the objects of $S$ are uniformly distributed in $S/\Gamma$.
\label{p:groupoidquotient}
\end{prop}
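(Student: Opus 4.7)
The plan is to unwind the two definitions --- uniform measure on the groupoid $S/\Gamma$ and the pushforward of uniform (counting) measure on the set $S$ --- and to compare them using the orbit-stabilizer theorem.

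First I would identify, for any object $s \in S$ of the groupoid $S/\Gamma$, its automorphism group and its isomorphism class in purely group-theoretic terms. By the definition of the morphisms in $S/\Gamma$, a morphism $s \to s'$ is an element $\gamma \in \Gamma$ with $\gamma \cdot s = s'$; in particular $\Aut_{S/\Gamma}(s) = \Stab_\Gamma(s)$, and the isomorphism class of $s$ is exactly its $\Gamma$-orbit $\Gamma \cdot s \subseteq S$.

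Next I would compute the probability, under counting measure on $S$, that a random element lies in a fixed isomorphism class $c = \Gamma \cdot s$. This probability is $|c|/|S|$. By the orbit-stabilizer theorem, $|c| = |\Gamma|/|\Stab_\Gamma(s)| = |\Gamma|/|\Aut_{S/\Gamma}(s)|$, so
\beq
\Pr[\text{random } s' \in S \text{ lies in } c] = \frac{|\Gamma|}{|S|} \cdot \frac{1}{|\Aut_{S/\Gamma}(s)|}.
\eeq
The factor $|\Gamma|/|S|$ is a constant independent of $c$, so this mass is inversely proportional to $|\Aut_{S/\Gamma}(s)|$. Comparing with Definition~\ref{de:uniformgroupoid}, this is precisely the uniform distribution on $S/\Gamma$.

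There is no real obstacle here --- the argument is essentially the orbit-stabilizer theorem restated in the language of action groupoids. The only thing to be a little careful about is confirming that two elements $s,s' \in S$ in the same $\Gamma$-orbit have the same number of groupoid-automorphisms (so that "inversely proportional to $|\Aut|$" makes sense on isomorphism classes), but this is immediate since conjugate stabilizers have the same order.
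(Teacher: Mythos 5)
Your proof is correct and is essentially the same argument as the paper's: identify $\Aut_{S/\Gamma}(s)$ with $\Stab_\Gamma(s)$ and apply orbit--stabilizer, so the mass of an orbit is proportional to $1/|\Aut_{S/\Gamma}(s)|$. You are in fact slightly more careful than the paper's one-line proof, since you make the normalizing constant $|\Gamma|/|S|$ explicit and note that conjugate stabilizers have equal order, which is exactly what Definition~\ref{de:uniformgroupoid} requires.
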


\begin{proof}  The probability that a random $s$ in $S$ lies in the $\Gamma$-orbit $\Gamma s_0$ of a fixed $s_0 \in S$ is precisely
\beq
|\Gamma s_0| / |S| = 1 / \Aut_{S/\Gamma}(s_0).
\eeq
\end{proof}

We now explain how this formalism applies in the present context.  Let
$(W,\overline{\omega})$ be a $\F_q$-vector space of dimension $2g$ endowed with a nondegenerate symplectic form.  Let $S$ be the set of $\sigma$-endomorphisms $F\colon W \ra W$ whose kernel is a maximal isotropic subspace of $W$.  Note that $S$ is in bijection with the set of triples $(W_1, W_2, F')$ described in Proposition~\ref{pr:uniformf}; in particular, a random $\DBT_1$ of rank $2g$ is the same thing as a random element of $S$ in uniform distribution.

Now the group $\Gamma = \Sp_{2g}(\F_q)$ acts on $S$ by changes of basis preserving the symplectic form.  And the groupoid $S/\Gamma$ is equivalent to the category of (Dieudonn\'e modules of)
principally quasi-polarized $\BT_1$'s.
Thus, a random pqp $\DBT_1$, in our sense, is a random principally polarized $\DBT_1$ in the sense of Definition~\ref{de:uniformgroupoid}.

The above discussion is rather formal, but we will see that the groupoid viewpoint is quite convenient in the proof of Proposition~\ref{P:connComp}.

\subsection*{The a-number of a random pqp $\DBT_1$}

Let $(D,F,V)$ be a $\DBT_1$ over $\F_q$.
By Theorem \ref{DieudonneMain} (\ref{alphacalc}), the $a$-number
of $D$ is the $k$-dimension of the intersection $\ker F\cap \ker V\subseteq D$;
by definition of the category $\DBT_1$ we have $\ker V=\im F$,
so also $a(D)=\dim_k(\ker F\cap \im F)$.

\begin{prop}  
\label{P:aNumber}
The probability that $a(D) = r$ is
\beq
 q^{- {r +1 \choose 2}} \prod_{i=1}^\infty(1+q^{-i})^{-1} \prod_{i=1}^r (1-q^{-i})^{-1}.
\eeq
\label{pr:anumber}
\end{prop}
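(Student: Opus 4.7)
The plan is to translate the problem into a pure combinatorics question about pairs of maximal isotropic subspaces of a symplectic $\F_q$-space, and then take a large $g$ limit.

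First I would use that in a $\DBT_1$ one has $\ker V = \im F$, so by Theorem~\ref{DieudonneMain}(\ref{alphacalc}) we have $a(D) = \dim_k(\ker F \cap \im F)$. Setting $W_1 = \ker F$ and $W_2 = \im F$, Proposition~\ref{pr:uniformf} says $(W_1,W_2)$ is uniformly distributed on pairs of maximal isotropic subspaces of $W=D/pD$; moreover $F'$ does not enter the intersection dimension. So, writing $\Lambda_g$ for the set of maximal isotropics in $W$,
\[
\Pr(a(D)=r) \;=\; \frac{\#\{(W_1,W_2)\in \Lambda_g\times\Lambda_g : \dim(W_1\cap W_2)=r\}}{|\Lambda_g|^2}.
\]
Since $\Sp(W)$ acts transitively on $\Lambda_g$, the inner count $N_r(W_1):=\#\{W_2\in\Lambda_g : \dim(W_1\cap W_2)=r\}$ is independent of $W_1$, so this probability equals $N_r/|\Lambda_g|$ for any fixed $W_1$.

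Next I would compute $N_r$ by stratifying $W_2$ over the subspace $U:=W_1\cap W_2\subseteq W_1$. The number of choices for $U$ of dimension $r$ is the Gaussian binomial $\binom{g}{r}_q$. Given $U$, isotropy of $W_2$ forces $W_2\subseteq U^\perp$; passing to the quotient, $\overline{W}_2:=W_2/U$ is a maximal isotropic subspace of the symplectic $\F_q$-space $U^\perp/U$ of dimension $2(g-r)$, and the condition $W_1\cap W_2=U$ is equivalent to $\overline{W}_2$ being transverse to $\overline{W}_1:=W_1/U$. Setting $m=g-r$, I would then invoke the classical parametrization: any maximal isotropic subspace of a $2m$-dimensional symplectic space transverse to a chosen maximal isotropic is the graph of a symmetric bilinear form on the other summand, giving $q^{m(m+1)/2}$ choices. (This calculation is characteristic-insensitive because the isotropy condition in this transversal setup forces symmetry of a matrix without constraining its diagonal, even in characteristic $2$.) Therefore
\[
N_r \;=\; \binom{g}{r}_q\, q^{(g-r)(g-r+1)/2}, \qquad |\Lambda_g|\;=\;\prod_{i=1}^g (q^i+1).
\]

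Finally I would take $g\to\infty$. Writing $\binom{g}{r}_q = \prod_{i=1}^r(q^{g-r+i}-1)/\prod_{i=1}^r(q^i-1)$ and $\prod_{i=1}^g(q^i+1)=q^{g(g+1)/2}\prod_{i=1}^g(1+q^{-i})$, a direct bookkeeping of powers of $q$ shows that the exponents $rg-\tfrac{r(r-1)}{2} + \tfrac{(g-r)(g-r+1)}{2} - \tfrac{g(g+1)}{2} - \tfrac{r(r+1)}{2}$ sum to $-\binom{r+1}{2}$, while the non-$q$ factors yield $\prod_{i=1}^r(1-q^{-i})^{-1}\cdot\prod_{i=1}^\infty(1+q^{-i})^{-1}$. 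Combining produces the claimed formula.

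The main obstacle is the count of maximal isotropics transverse to a given one in characteristic $2$, but (as indicated) the graph description yields exactly symmetric matrices without any alternating restriction, so $q^{m(m+1)/2}$ is valid uniformly in $p$. Once this is in hand, the remaining steps are purely routine manipulations of Gaussian binomials and geometric products.
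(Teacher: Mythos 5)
Your argument is correct, and the reduction you start from is the same as the paper's: by Proposition~\ref{pr:uniformf} and Theorem~\ref{DieudonneMain}(\ref{alphacalc}), everything comes down to the probability that two uniformly random maximal isotropic subspaces of a $2g$-dimensional symplectic space over $\F_q$ meet in dimension $r$. Where you diverge is in how that count is performed. The paper uses orbit--stabilizer: Witt's theorem gives transitivity of $\Sp_{2g}(\F_q)$ on pairs of Lagrangians with $r$-dimensional intersection, the stabilizer of an explicit pair is written down as block matrices $\left[\begin{smallmatrix} A & B \\ 0 & (A^T)^{-1}\end{smallmatrix}\right]$ with $A$ in a parabolic and $B$ a constrained symmetric matrix, and the probability is the ratio of the resulting orbit size to the square of the number of Lagrangians. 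You instead fix $W_1$, stratify the second Lagrangian by its intersection $U\subseteq W_1$, pass to the symplectic quotient $U^\perp/U$, and count transverse Lagrangians there as graphs of symmetric bilinear forms, getting the closed form $N_r=\binom{g}{r}_q\,q^{\binom{g-r+1}{2}}$ against $|\Lambda_g|=\prod_{i=1}^g(q^i+1)$. Your route buys an exact finite-$g$ formula
\[
\Pr_g\bigl(a(D)=r\bigr)=q^{-\binom{r+1}{2}}\,\prod_{i=1}^r\frac{1-q^{-(g-r+i)}}{1-q^{-i}}\,\prod_{i=1}^g(1+q^{-i})^{-1},
\]
from which the stated limit is immediate, and your explicit remark that the graph parametrization yields \emph{symmetric} (not alternating) forms even in characteristic $2$ is a point the paper leaves implicit in its stabilizer computation; the paper's route avoids the quotient-space bijection but requires identifying the stabilizer and its order. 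Two small things you take for granted are standard and fine: the formula for $|\Lambda_g|$ (equivalently $|\Sp_{2g}(\F_q)|/\bigl(q^{g(g+1)/2}|\GL_g(\F_q)|\bigr)$, as in the paper) and the routine verification that $W_2\mapsto W_2/U$ is a bijection onto Lagrangians of $U^\perp/U$ transverse to $W_1/U$.
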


\begin{proof} The $a$-number does not depend on $F'$, so we are computing the probability that two random maximal isotropic subspaces of a large symplectic space over $\F_q$ intersect in a subspace of dimension $r$.  

Let $W$ be a $2g$-dimensional symplectic space.  The number of maximal isotropic subspaces in $W$ is
\begin{equation}
\frac{|\Sp_{2g}(\F_q)|}{q^{(1/2)g(g+1)} |\GL_g(\F_q)|} =q^{1/2(g^2+g)} \frac{|\Sp_{2g}(\F_p)|}{q^{2g^2 + g}} \frac {q^{g^2}}{|\GL_g(\F_q)|}
\label{eq:maxisocount}
\end{equation}

By the symplectic version of Witt's Theorem, the symplectic group $\Sp(W)$ acts transitively on the pairs of maximal isotropic subspaces with $r$-dimensional intersection; so to count the number of such pairs, it suffices to compute the size of the stabilizer of such a pair in $\Sp(W)$.  Suppose for instance that the pair is given by
\beq
V_1 = \langle e_1, \ldots, e_g \rangle, V_2 = \langle e_1, \ldots, e_r, e_{g+r+1}, \ldots, e_{2g} \rangle.
\eeq

Then the stabilizer of the pair $(V_1,V_2)$ is the group of matrices of the form
\beq
\mat{A}{B}{0}{(A^T)^{-1}}
\eeq
where $A$ lies in the parabolic subgroup preserving $\langle{e_1,
  \ldots, e_r \rangle}$ and $B$ is symmetric, having zero $(i,j)$
entry when $i > r$ and $j > g+r$.  

The order of this group is
\beq
q^{g^2 + {r +1 \choose 2}} \frac{|\GL_r(\F_q)|}{q^{r^2}} \frac{|\GL_{g-r}(\F_q)|}{q^{(g-r)^2}}
\eeq
so the number of pairs of maximal isotropics with $r$-dimensional intersection is
\begin{equation}
q^{g^2 + g - {r+1 \choose 2}} \frac{q^{r^2}}{|\GL_r(\F_q)|} \frac{q^{(g-r)^2}}{|\GL_{g-r}(\F_q)|} \frac{|\Sp_{2g}(\F_q)|}{q^{2g^2 + g}}
\label{eq:intrcount}.
\end{equation}
Dividing \eqref{eq:intrcount} by the square of \eqref{eq:maxisocount} yields
\beq
 q^{- {r +1 \choose 2}} \frac{q^{r^2}}{|\GL_r(\F_p)|} \frac{q^{(g-r)^2}}{|\GL_{g-r}(\F_q)|} \left( \frac{|\GL_g(\F_p)|}{q^{g^2}} \right)^2
 \frac{q^{2g^2+g}}{|\Sp_{2g}(\F_q)|}
\eeq
which, as $g$ goes to infinity with $r$ fixed, approaches the desired quantity.
\end{proof}

\begin{rem}
\label{r:postANumber}

  \begin{itemize}
  \item []
  \item[(1)] We note that this prediction is in keeping with the fact that
    the locus of abelian varieties with $a$-number at least $r$ in
    $\AA_g / \F_q$ has codimension $r+1 \choose 2$.

  \item[(2)] The work of Poonen and Rains \cite{PoonenR:maximalIsotropics}
    posits that the mod $p$ Selmer group of a random quadratic twist
    of a fixed elliptic curve should be distributed like the
    intersection of two random maximal isotropics in an {\em
      orthogonal} vector space.  They show that the mod $p$ Selmer
    group actually {\em does} arise as the intersection of two maximal
    isotropics -- the question, then, is whether these isotropics are
    in fact ``uniformly distributed" in an appropriate sense.  Our
    situation is similar; the $a$-number of a pqp $p$-divisible group
    is indeed the dimension of the intersection of the two maximal
    isotropics $FW$ and $VW$ in the symplectic vector space $W$, and
    one is asking whether these maximal isotropics are distributed
    uniformly when $W$ arises from an abelian variety.

  \item[(3)] The conjectured distribution of the $a$-number is the same as
    the distribution on the dimension of the fixed space of a random
    large symplectic matrix over $\F_q$, which was computed in an
    unpublished work by Rudvalis and Shinoda
    ~\cite{RudvalisS:enumeration} (see
    \cite{Fulman:conjugacySymplecticOrthogonal} for a review of their
    results and an alternative proof).  This distribution also appears
    in the conjectures of Malle~\cite{Malle:distribution} and
    Garton~\cite{Garton:thesis} as the conjectured distribution of
    $p$-ranks of ideal class groups of number fields containing $p$th
    roots of unity.  In the class group context, the relationship with
    the fixed space of a random symplectic matrix is motivated by the
    analogy between number fields and function fields, where the
    symplectic matrix describes the action of $\Frob_{\ell}$ on $p$-adic cohomology.

  \end{itemize}

\end{rem}

The $a$-number of $D$ is $0$ if and only if $D$ is ordinary.  We
thus have the following corollary.

\begin{cor}
The probability that a random pqp Dieudonn\'e module is ordinary is
\beq
\prod_{i=1}^\infty(1+q^{-i})^{-1}.
\eeq
\end{cor}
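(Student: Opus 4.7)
The plan is to deduce this directly as the $r=0$ specialization of Proposition~\ref{P:aNumber}, after verifying that ``ordinary'' is equivalent to ``$a$-number zero'' for pqp Dieudonn\'e modules. All the substantive work is already contained in the proof of Proposition~\ref{pr:anumber}; the corollary is almost tautological given the right dictionary.

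First I would record the equivalence. For a pqp Dieudonn\'e module $D$ of rank $2g$, ordinariness means the associated $p$-divisible group is of the form $G^{\et}\times G^{\mult}$ with no local-local part. By Theorem~\ref{pdivequiv}(3), this says $D^{\loc}=0$, and by Theorem~\ref{pdivequiv}(5) applied at the $\BT_1$ level (or equivalently Theorem~\ref{DieudonneMain}(\ref{alphacalc}) applied to $G[p]$), $D^{\loc}=0$ holds if and only if $\ker\overline F\cap\ker\overline V=0$ in $\overline D = D/pD$. Combined with the $\DBT_1$ identity $\ker\overline V=\im\overline F$, this last condition is exactly $a(D)=\dim_k(\ker\overline F\cap\im\overline F)=0$. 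The pqp condition $D^{\mult}\simeq (D^{\et})^\vee$ then forces both of these pieces to have rank $g$, consistent with maximal $p$-rank.

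Next I would specialize Proposition~\ref{P:aNumber} to $r=0$. The binomial coefficient ${r+1\choose 2}$ becomes $0$, so $q^{-{r+1\choose 2}}=1$; the rightmost factor $\prod_{i=1}^r(1-q^{-i})^{-1}$ becomes the empty product $1$; and only the middle factor $\prod_{i=1}^\infty(1+q^{-i})^{-1}$ survives. This is precisely the claimed probability.

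There is no real obstacle. The only point that requires care is the translation between the geometric notion of ``ordinary'' and the condition $a(D)=0$ on the $\DBT_1$, and this is a standard consequence of the Dieudonn\'e dictionary of \S\ref{s:background}; everything else is bookkeeping in the formula from Proposition~\ref{P:aNumber}.
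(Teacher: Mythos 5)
Your proposal is correct and follows the paper's own route: the paper deduces the corollary from Proposition~\ref{pr:anumber} by observing that a pqp Dieudonn\'e module is ordinary exactly when its $a$-number is $0$, and then reading off the $r=0$ case of the formula, which is precisely your argument (you simply spell out the standard dictionary step that the paper leaves implicit).
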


In problems of Cohen-Lenstra type, it is often the case that moments of variables have nicer formulae than probability distributions do.  The present situation is no exception.

\begin{prop}  Let $X_m(D)$ be the number of closed immersions of group
  schemes over $\F_q$ from $\alpha_p^m$ to the 
$p$-torsion in the $p$-divisible group associated to $D$.  Then $\EE X_m(D) = q^{m \choose 2}$.
\end{prop}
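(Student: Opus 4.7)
The plan is to translate via Dieudonn\'e theory and invoke Proposition~\ref{pr:uniformf}, reducing everything to a uniform isotropic-subspace count in $W = \D(D[p])$.  First, by the equivalence of Theorem~\ref{DieudonneMain}, closed immersions $\alpha_p^m \hookrightarrow D[p]$ biject with injective $A_k$-module maps $\D(\alpha_p^m) = k^m \hookrightarrow W$; since $F$ and $V$ act as zero on $\D(\alpha_p) = k$, such a map must factor through $\ker F \cap \ker V$, which equals $\ker F \cap \im F$ in a $\DBT_1$.  So $X_m(D)$ is the number of injective $\F_q$-linear maps $\F_q^m \hookrightarrow \ker F \cap \im F$.

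Next I would swap expectation and sum:
\[
\EE_g X_m(D) \;=\; \sum_\phi \Pr\bigl[\phi(\F_q^m) \subseteq \ker F \cap \im F\bigr],
\]
the sum running over injective linear $\phi\colon \F_q^m \to W$.  By Proposition~\ref{pr:uniformf} the pair $(\ker F, \im F)$ is uniformly distributed on ordered pairs of maximal isotropic subspaces of $W$, so its marginals are independent and uniform on the set of maximal isotropics.  For a fixed $\phi$, the probability is therefore zero unless $\phi(\F_q^m)$ is isotropic, in which case it equals $(N_{g-m}/N_g)^2$, where $N_h := \prod_{i=1}^h(q^i+1)$ is the number of maximal isotropics in a $2h$-dimensional symplectic space (the ratio $N_{g-m}/N_g$ being the chance that a given isotropic $m$-subspace is contained in a uniformly random maximal isotropic, via the symplectic Witt theorem applied to $U^\perp/U$).

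The remainder is combinatorial.  One counts injective linear $\phi\colon\F_q^m\to W$ with isotropic image as $I_m(g) \cdot |\GL_m(\F_q)|$, where $I_m(g)$ is the number of isotropic $m$-subspaces of $W$; double counting pairs $(U, L)$ of an isotropic $m$-subspace inside a maximal isotropic gives the flag-incidence identity $I_m(g) N_{g-m} = N_g \binom{g}{m}_q$.  Assembling,
\[
\EE_g X_m(D) \;=\; \binom{g}{m}_q \cdot |\GL_m(\F_q)| \cdot \frac{N_{g-m}}{N_g} \;=\; q^{\binom{m}{2}} \prod_{i=g-m+1}^{g} \frac{q^i - 1}{q^i + 1},
\]
and the $g\to\infty$ limit of the remaining product is $1$, yielding $\EE X_m(D) = q^{\binom{m}{2}}$.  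I anticipate no serious obstacle; the one point that needs care is reading Proposition~\ref{pr:uniformf} as furnishing genuinely independent uniform marginals for $\ker F$ and $\im F$, which follows from the fact that uniform measure on a Cartesian product is the product of uniform measures.
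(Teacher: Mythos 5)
Your proof is correct, but it follows a genuinely different route from the paper's. Both arguments open the same way: closed immersions $\alpha_p^m\hookrightarrow G[p]$ are identified, via the exact Dieudonn\'e equivalence, with injections $\F_q^m\hookrightarrow \ker F\cap\im F$, so $X_m(D)$ depends only on the $a$-number. From there the paper transfers the problem to a different model: invoking the coincidence (Remark~\ref{r:postANumber}(3)) between the limiting $a$-number distribution and the distribution of the fixed-space dimension of a random element of $\Sp(W)$, it computes the expected number of injections of $\F_q^m$ into the fixed space by Burnside's lemma, and Witt's theorem identifies the $\Sp(W)$-orbits on injections $\F_q^m\hookrightarrow W$ with alternating forms on $\F_q^m$, of which there are exactly $q^{m\choose 2}$. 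You instead stay entirely inside the two-random-Lagrangian model of Proposition~\ref{pr:uniformf}: linearity of expectation, the (correct) observation that uniformity on ordered pairs of maximal isotropics gives independent uniform marginals for $\ker F$ and $\im F$, the containment probability $N_{g-m}/N_g$ with $N_h=\prod_{i=1}^h(q^i+1)$, and the flag-incidence identity $I_m(g)N_{g-m}=N_g\binom{g}{m}_q$, which assembles to the exact finite-$g$ value $q^{m\choose 2}\prod_{i=g-m+1}^{g}\frac{q^i-1}{q^i+1}$. What each approach buys: yours is self-contained (no appeal to the random-symplectic-matrix comparison, which the paper only justifies as a limiting statement, nor to Burnside), and it yields the exact moment at finite $g$ together with the rate of convergence; the paper's is shorter and more conceptual, explaining the limit $q^{m\choose 2}$ as an orbit count --- the number of alternating bilinear forms on $\F_q^m$ --- in the spirit of moment computations elsewhere in the Cohen--Lenstra literature.
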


\begin{proof}
In the language of the present paper we claim that 
\beq
X_m(D) = (q^{a(D)} - 1) (q^{a(D)} - q) \ldots (q^{a(D)} - q^{m-1}).
\eeq
Indeed, if $G$ is the $p$-divisible group attached to $D$
then any closed immersion $\alpha_p^m\hookrightarrow G[p]$ of group schemes necessarily
factors through the maximal $\alpha$-type subgroup scheme $G[F,V]  \simeq \alpha_p^{a(G)}$ of $G[p]$
(see Remark \ref{farems} (\ref{alphatype})).  In particular, such closed immersions
are in bijection with closed immersions $\alpha_p^m\hookrightarrow \alpha_p^{a(G)}$,
which are in bijection with injections $\F_q^m\hookrightarrow \F_q^{a(G)}$
via the exact functor $\D(\cdot)$, thanks to Theorem \ref{DieudonneMain} (\ref{alphacalc}).   

Because the distribution of $X$ agrees with the distribution on the
dimension of the fixed space of a random matrix $g$ in $\Sp(W)$ (see
Remark \ref{r:postANumber} (3)), it
suffices to show that the number of injections from an $m$-dimensional
vector space into the fixed space of $g$ has the desired expected
value.  
By Burnside's Lemma, this is the same as the number of orbits of
$\Sp(W)$ acting on the set of injections $i\colon  \F_q^m \inj W$.  By
the symplectic version of Witt's Theorem,
 two such injections $i_1,i_2$ are in the same orbit if the symplectic forms $i_1^*\langle \rangle$ and $i_2^*\langle \rangle$ agree; so the number of orbits is just the number of alternating bilinear forms on an $m$-dimensional vector space, which is  $q^{m \choose 2}$ as claimed.
\end{proof}
 
\subsection*{The $p$-corank of a random pqp $\DBT_1$}
\label{ss:pCorank}

Suppose that $X(D)$ is a statistic which is invariant under symplectic
change of basis, i.e., under conjugation of $F$ by $\Sp(W)$.  As above,
by Witt's Theorem
all pairs of maximal isotropic subspaces with intersection dimension
$r$ are in the same orbit of the symplectic group.  Thus, to compute the
expected value of $X(D)$ conditional on $a(D) = r$, it suffices to
compute the expected value of $X(D)$ for a fixed choice of $W_1$ and
$W_2$, and $F'$ chosen uniformly from the $\sigma$-isomorphisms
from $W / W_1$ to $W_2$.  The composition
\beq
\xymatrix@C=30pt{
	{\phi \colon W_2} \ar[r] & {W} \ar@{->>}[r] & {W/W_1} \ar[r]^-{F'} & {W_2}
}
\eeq
is then a $\sigma$-endomorphism of $W_2$ of rank $g-r$, and in 
fact is chosen uniformly from the set of such $\sigma$-endomorphisms.

When $W_2 = \im F$, the $\sigma$-endomorphism $\phi$ is just the map $FW \ra
F W$ 
induced by $F$.  In particular, the $p$-corank of the $p$-divisible group attached to $D$ is precisely the corank of $\phi^\infty$.

\begin{prop}
\label{p:corank}
Let $0 \leq r \leq s$ be integers. Then the probability that the a-number of $D$ is $r$ and the $p$-corank of $D$ is $s$ is
\begin{equation}
q^{-{r+1 \choose 2} + r - s} \prod_{i=1}^\infty (1+q^{-i})^{-1} \prod_{i=r}^{s-1} (1-q^{-i}) \prod_{i=1}^{s-r} (1-q^{-i})^{-1}.
\label{eq:corank}
\end{equation}
\label{pr:corank}
\end{prop}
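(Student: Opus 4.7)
The plan is to extend the argument behind Proposition~\ref{pr:anumber}. By Proposition~\ref{pr:uniformf}, a random pqp $\DBT_1$ is equivalent to a uniform pair $(W_1, W_2) = (\ker F, \im F)$ of maximal isotropic subspaces of $W$, together with, conditional on the pair, a uniform $\sigma$-isomorphism $F' \colon W/W_1 \to W_2$. Equivalently, the map $\phi := F|_{W_2} \colon W_2 \to W_2$ is a uniform $\sigma$-endomorphism with $\ker \phi = L := W_1 \cap W_2$, the $a$-number equals $r = \dim L$, and the $p$-corank equals $s = \dim \ker \phi^\infty$. The proof of Proposition~\ref{pr:anumber} already supplies the factor $P(a = r)$, so by Witt's theorem (transitivity of $\Sp$ on pairs of maximal isotropics with fixed intersection dimension) we may fix a model pair with $\dim L = r$ and reduce to computing $P(\dim \ker \phi^\infty = s \mid \ker \phi = L)$ for uniform $\phi$.

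The geometric input is the Fitting decomposition $W_2 = H^{\loc} \oplus H^{\et}$, where $H^{\loc} = \ker \phi^\infty$ (dim $s$) is $\phi$-stable with $\phi|_{H^{\loc}}$ nilpotent and $L \subset H^{\loc}$, while $H^{\et} = \bigcap_k \phi^k(W_2)$ (dim $g - s$) is $\phi$-stable with $\phi|_{H^{\et}}$ a $\sigma$-automorphism. Parametrizing $\phi$ by the tuple $(H^{\loc}, H^{\et}, \phi|_{H^{\loc}}, \phi|_{H^{\et}})$ is a bijection, and the count factors as
\[
\binom{g-r}{s-r}_q \cdot q^{s(g-s)} \cdot N(r, s, q) \cdot |\GL_{g-s}(\F_q)|,
\]
counting respectively: subspaces $H^{\loc} \supset L$ of dimension $s$; linear complements $H^{\et}$ of dimension $g - s$; nilpotent $\sigma$-endomorphisms of $H^{\loc}$ with kernel exactly $L$ (denoted $N(r, s, q)$); and $\sigma$-automorphisms of $H^{\et}$.

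The technical heart is the closed form
\[
N(r, s, q) = q^{{s-r \choose 2}} \prod_{i=r}^{s-1} (q^i - 1),
\]
proved by induction on $s - r$: restriction of a nilpotent $\sigma$-endomorphism $\psi$ of $H^{\loc}$ with $\ker \psi = L$ to its image $\im \psi$ yields a nilpotent $\sigma$-endomorphism on an $(s - r)$-dimensional space with kernel $\im \psi \cap L$, and stratifying by $t = \dim(\im \psi \cap L)$ gives a recursion $N(r, s, q) = \sum_{t=0}^{\min(r,\, s-r)} c_t(r, s, q) \cdot N(t, s - r, q)$ with explicit $q$-factors $c_t$ that unwinds to the stated closed form. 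Dividing by the total count $\prod_{i=0}^{g-r-1}(q^g - q^i)$ of $\sigma$-endomorphisms of $W_2$ with kernel $L$, multiplying by $P(a = r)$, and passing to $g \to \infty$ using $\binom{n}{k}_q \sim q^{k(n-k)} / \prod_{i=1}^k (1 - q^{-i})$ and $|\GL_n(\F_q)| \sim q^{n^2} \prod_{i \ge 1}(1 - q^{-i})$ produces the stated formula after telescoping. The main obstacle is the inductive computation of $N(r, s, q)$; the subsequent limiting assembly is routine $q$-Pochhammer algebra.
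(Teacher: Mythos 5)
Your proposal is correct, and it arrives at exactly the intermediate quantity the paper needs --- the large-$g$ conditional probability \eqref{eq:prs} --- but by a genuinely different route at the key counting step. The paper's proof conditions on $\mathrm{rank}(\phi)=g-r$ and simply \emph{quotes} Holland's (unpublished) formula for the number of $\sigma$-endomorphisms with prescribed rank and stable rank, then divides by the rank-$(g-r)$ count and multiplies by the $a$-number probability of Proposition~\ref{pr:anumber}. You instead condition on $\ker\phi$ being a fixed $r$-dimensional subspace $L$ (which is in fact precisely what conditioning on $(W_1,W_2)$ in Proposition~\ref{pr:uniformf} gives, and is equivalent to the paper's conditioning by $\Sp$- resp.\ $\GL$-equivariance), and you rederive the needed count from scratch via the Fitting parametrization $(H^{\loc},H^{\et},\phi|_{H^{\loc}},\phi|_{H^{\et}})$ --- notably the same device the paper itself deploys later in the proof of Proposition~\ref{P:connComp}. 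Your closed form $N(r,s,q)=q^{\binom{s-r}{2}}\prod_{i=r}^{s-1}(q^i-1)$ is correct (it checks in small cases, it is consistent with the total nilpotent count, and feeding it through your assembly the $q$-exponents telescope to $q^{r-s}$ and reproduce \eqref{eq:prs}, hence \eqref{eq:corank}). What each approach buys: the paper's is shorter but rests on an external, unpublished counting result; yours is self-contained and makes the structure (choice of local-local part, free choice of complement, nilpotent part carrying the kernel condition, invertible part contributing $|\GL_{g-s}(\F_q)|$) transparent. The one place you should be careful is the inductive evaluation of $N(r,s,q)$: you do not write the coefficients $c_t$, and for the recursion to be well-posed you must check that the number of extensions of $\bar\psi=\psi|_{\im\psi}$ to $\psi$ on $H^{\loc}$ with kernel exactly $L$ and image exactly $\im\psi$ depends only on $t=\dim(\im\psi\cap L)$ and not on finer position data of $\bar\psi$; this is true and the computation is routine, so it is a fillable detail rather than a structural gap, but as written it is the only unproved ingredient of your argument.
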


\begin{proof}
We first show that the probability that a random $\sigma$-endomorphism
of a $g$-dimensional vector space $V$ has rank $g-r$ approaches
\begin{equation}
\label{eq:corankr}
q^{-r^2} \prod_{i=r+1}^\infty (1-q^{-i}) \prod_{i=1}^r (1-q^{-i})^{-1}
\end{equation}
as $g$ goes to infinity.
Indeed, the map $\phi \mapsto
(V \ra \im \phi, \im \phi)$ defines a bijection between $\sigma$-endomorphisms $\phi
\colon V \ra V$ of rank $g-r$ and pairs $(\psi,W)$ where $W \in
\Gr_{g,g-r}(\F_q)$ is a subspace of $V$ of dimension $g-r$ and
$\psi\colon V \to W$ is a $\sigma$-semilinear surjection. For a given such $W$, there are 
\begin{equation}
\label{eq:semi}
\prod_{i = 0}^{g-r-1}(q^g-q^i)
\end{equation}
such $\sigma$-semilinear surjections (indeed, the $\sigma$-semilinear surjective maps
$V\rightarrow W$ correspond to surjective {\em linear} mappings $\sigma^*(V)\rightarrow W$,
and $\sigma^*(V)$ is an $\F_q$-vector space of dimension $g$).
Considering the stabilizer of the natural action of $\GL_g$ on
$\Gr_{g,g-r}$ shows  that 
\begin{equation}
\label{eq:grass}
|\Gr_{g,g-r}(\F_q)| = 
\frac
{|\GL_g(\F_q)|}
{|\GL_r(\F_q)| |\GL_{g-r}(\F_q)| |\M_{r,g-r}(\F_q)|}.
\end{equation}
Dividing the product of (\ref{eq:semi}) and (\ref{eq:grass}) by $|\M_{g}(\F_q)|$ yields
\[
q^{-r^2}
\frac
{\prod_{i = 0}^{g-r-1}(q^g-q^i)}
{q^{(g-r)g}}
\frac
{q^{r^2}}
{|\GL_{r}(\F_q)|}
\frac
{q^{(g-r)^2}}
{|\GL_{g-r}(\F_q)|}
\frac
{|\GL_g(\F_q)|}
{q^{g^2}}
\]
which, as $g$ goes to infinity with $r$ fixed, approaches the quantity
of (\ref{eq:corankr}).

By \cite{Holland:counting}, for integers $g > s \geq r \geq 0$ the number of
  $\sigma$-endomorphisms of $V$ such that
  $\rank(M) = s$ and $\rank(M^{\infty}) = r$ is
  \[
\frac{
\left(\prod_{i = 0}^{r-1}(q^{g} - q^i)\right)
|\GL_{g-r}(\F_q)|
\left(\prod_{i = 0}^{s-r-1}(q^{g-r-1} -  q^i)\right)
q^{r(g-r)}
}{
|\GL_{s-r}(\F_q)| \cdot
    |\GL_{g-s}(\F_q)| \cdot |\M_{s-r,g-s}(\F_q)}|.
\]
It follows that the probability that a random $\sigma$-endomorphism of $V$
has $\rank(M^\infty) = g-s$, conditional on   $\rank(M) = g-r$ approaches
\begin{equation}
q^{r-s}  \prod_{i=r}^{s-1} (1-q^{-i})  \prod_{i=1}^{r} (1-q^{-i})  \prod_{i=1}^{s-r} (1-q^{-i})^{-1}
\label{eq:prs}
\end{equation}
as $g$ approaches infinity.
Multiplying \eqref{eq:prs} by the constant in Proposition~\ref{pr:anumber} yields the desired result.

\end{proof}

Summing \eqref{eq:corank} with $s$ fixed and $r$ between $1$ and $s$ gives the probability that the $p$-corank of $D$ is $s$; for example, the probability that the corank is $1$ is $q^{-1} \prod_{i=1}^\infty(1+q^{-i})^{-1}$, and the probability that the corank is $2$ is $(q^{-2} + q^{-3})\prod_{i=1}^\infty(1+q^{-i})^{-1}$.

\subsection*{The group of $\F_q$-rational points of a random pqp $\DBT_1$}
\label{ss:rationalPoints}

Let $G$ be a $\BT_1$ with Dieudonn\'e module $W$. 
 The group $G(\F_q)$ of its $\F_q$-rational points is the subgroup of $G(\overline{\F}_q)$
fixed by $\Frob_q$.

\begin{prop} 
\label{P:connComp}
The group of $\F_q$-rational points of the group scheme associated to
a random pqp $\DBT_1$ has cardinality $p^d$ with probability
\begin{equation}
p^{-d^2} \prod_{i=1}^d (1-p^{-i})^{-1} \prod_{j=d+1}^\infty (1-p^{-j}).
\label{eq:cohenlenstra}
\end{equation}
\end{prop}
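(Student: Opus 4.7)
By covariant Dieudonn\'e theory, $G(\F_q)$ is canonically identified with $\{w\in W : Fw = w\}$ as an $\F_p$-vector space. Since $F$ acts as zero on $W^{\mult}$ and nilpotently on $W^{\loc}$ (cf.\ Theorem \ref{pdivequiv}), any fixed vector lies in the \'etale summand, so $G(\F_q) = (W^{\et})^{F = \id}$. The plan is to analyze the distribution of the pair $(W^{\et}, F|_{W^{\et}})$ using the groupoid formalism of this section, and then to perform a Friedman--Washington-type moment computation.

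Conditional on the $p$-rank $f = \dim_{\F_q} W^{\et}$, I claim the isomorphism class of the \'etale $\BT_1$ $G^{\et}$ is distributed according to the uniform groupoid measure of Definition \ref{de:uniformgroupoid}. To see this, invoke Proposition \ref{pr:uniformf} (writing $F$ in terms of the uniformly chosen $(\ker F, \im F, F')$), and then argue by symmetry---in the spirit of the proof of Proposition \ref{pr:corank}---that the induced $\sigma$-endomorphism $\phi$ on $\im F$, upon restriction to the stable image $W^{\et} = \bigcap_n \phi^n(\im F)$, is uniformly distributed as a $\sigma$-bijection of $W^{\et}$ modulo $\GL_f(\F_q)$-conjugation. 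Via the Dieudonn\'e equivalence and Galois descent for \'etale group schemes over $\F_q$, the groupoid of \'etale $\BT_1$'s over $\F_q$ of $p$-rank $f$ is equivalent to the groupoid of pairs $(\F_p^f, A)$ with $A \in \GL_f(\F_p)$ (encoding the action of $\Frob_q$ on $G^{\et}(\overline{\F}_q)$), modulo $\GL_f(\F_p)$-conjugation. By Proposition \ref{p:groupoidquotient}, the uniform groupoid measure transports to the uniform probability measure on $\GL_f(\F_p)$. Thus conditionally on $f$, $G(\F_q) \cong \ker(A - I) \subseteq \F_p^f$ for $A$ sampled uniformly from $\GL_f(\F_p)$.

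It remains to compute the limiting distribution of $\dim_{\F_p}\ker(A - I)$ for $A$ uniform in $\GL_n(\F_p)$ as $n \to \infty$. Directly,
\[
\EE|\ker(A-I)|^k = \sum_{(v_1,\ldots,v_k) \in (\F_p^n)^k} \frac{|\{B\in\GL_n(\F_p) : Bv_i = v_i \text{ for all } i\}|}{|\GL_n(\F_p)|},
\]
which one evaluates by stratifying the sum according to $\dim\Span(v_i)$ and using the transitivity of $\GL_n(\F_p)$ on ordered tuples of a given span-dimension. Each moment converges as $n \to \infty$; for example, $\EE|\ker(A-I)| \to 2$ and $\EE|\ker(A-I)|^2 \to p + 3$. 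Standard $q$-series identities (with $q = p^{-1}$) identify these moments as those of the distribution
\[
p^{-d^2}\prod_{i=1}^d(1-p^{-i})^{-1}\prod_{j=d+1}^\infty(1-p^{-j}),
\]
which is also the $n \to \infty$ limiting distribution of $\dim\coker(M)$ for uniform $M \in \M_n(\F_p)$. Finally, Proposition \ref{pr:corank} implies that the $p$-corank has a limiting distribution on the nonnegative integers, so the $p$-rank $f$ tends to infinity in probability as $g \to \infty$; the conditional limit then coincides with the unconditional limit, completing the proof.

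The main obstacle will be the second step: one must carefully unpack the uniform distribution on pqp $\DBT_1$'s and show that it descends, via the groupoid formalism, to the uniform measure on $\GL_f(\F_p)$ describing the Frobenius action on the \'etale part. This requires a Lang--Steinberg-type argument to reconcile the twisted $\GL_f(\F_q)$-conjugation action $B \mapsto \sigma^{-1}(g) B g^{-1}$ on $\sigma$-bijections with the ordinary $\GL_f(\F_p)$-conjugation action on Galois Frobenii, together with a verification that the relevant automorphism groups on the two sides of the equivalence have matching orders.
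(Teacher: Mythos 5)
Your proposal is correct and takes essentially the same route as the paper: isolate the bijective (\'etale) part of the random semilinear endomorphism (the paper does this via the quadruple $(X,Y,\phi_X,\phi_Y)$, you via the stable image), transport its uniformity through the groupoid equivalences and Proposition~\ref{p:groupoidquotient} to get a uniform Frobenius matrix in $\GL_f(\F_p)$, and finish with the limiting distribution of $\dim\ker(A-I)$ together with the observation from Proposition~\ref{pr:corank} that the $p$-rank tends to infinity. The only difference is cosmetic: you sketch the $\GL_n(\F_p)$ computation via moments (your values $2$ and $p+3$ are correct), whereas the paper simply cites this Friedman--Washington-type limit as well known.
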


\begin{proof}

We again use the description of $\phi$ (from the proof of
  Proposition  \ref{p:corank}) as a random
  corank-$d$ $\sigma$-endomorphism of the $g$-dimensional vector space $FW$.
  Let $(X,Y,\phi_X,\phi_Y)$ be a quadruple where $X \oplus Y = FW$ is
  a direct sum decomposition, $\phi_X$ is a nilpotent $\sigma$-endomorphism of
  $X$ with corank $d$, and $\phi_Y$ is a $\sigma$-automorphism of $Y$.  Then
  $\phi_X \oplus \phi_Y$ is a corank-$d$ $\sigma$-endomorphism of $FW$.
  Conversely, any choice of a corank-$d$ $\sigma$-endomorphism $\phi$ yields a
  quadruple as above by taking $Y$ to be the subspace of $FW$ on which
  $\phi$ acts invertibly and $X$ the subspace on which $\phi$ acts
  nilpotently.  So a uniformly chosen corank-$d$ $\sigma$-endomorphism of $FW$
  is the same as a uniformly chosen quadruple $(X,Y,\phi_X,\phi_Y)$.
  In particular, the action of $F$ on $F^\infty W$ is precisely
  $\phi_Y$, which is drawn uniformly at random from the set of $\sigma$-automorphisms of $Y$.
  
  Now $Y$ is itself a Dieudonn\'{e} module, on which $F$ is bijective.
  Fix a positive integer $N$. Let $\DD_p$ be the category of rank-$N$ $p$-torsion Dieudonn\'{e} modules with $\F_q$ coefficients on which $F$ is bijective,  $\GG_p$ the category of rank-$N$ \'{e}tale group schemes over $\F_q$ killed by $p$, and $\FF_p$ the category of $N \times N$ matrices over $\F_p$.  The morphisms from $x$ to $y$ are, respectively:  isomorphisms of Dieudonn\'{e} modules from $x$ to $y$; group scheme isomorphisms from $x$ to $y$; changes of basis intertwining $x$ and $y$.  All three of these categories are finite groupoids and all three are equivalent.
 
The uniformity of $\phi_Y$ implies by Proposition~\ref{p:groupoidquotient} that $Y$ is uniformly distibuted in $\DD_p$.  Thus, the \'{e}tale group scheme $G_Y/\F_q$ is uniformly distributed in $\GG_p$ and the matrix $M_Y$ giving the action of $\Frob_q$ on $G_Y(\overline{\F}_q)$ is uniformly distributed in $\FF_p$.  It is precisely the dimension of $\coker(M_Y -1)$ whose distribution we are trying to study.  But applying Proposition~\ref{p:groupoidquotient} again, the distribution of $\coker(M_Y - 1)$ for $M_Y$ uniformly distributed in $\FF_p$ is identical with the distribution obtained by letting $M_Y$ be a random element of the set $\GL_N(\F_p)$.  But the distribution of $\dim \coker(M_Y - 1)$ when $M_Y$ is a random invertible matrix is  well-known to
  approach the value \eqref{eq:cohenlenstra} as $N = \dim Y \ra \infty$. 
  It is easy to see from Proposition~\ref{pr:corank} that $\dim Y$ is
  larger than any constant multiple of $g$ with probability $1$; this
  finishes the proof.
\label{TODO:30}
\end{proof}

\begin{cor}
The expected number of surjections from a random pqp $\BT_1$ to the constant group scheme $(\Z/p\Z)^d$ is $1$.
\label{co:clmoments}
\end{cor}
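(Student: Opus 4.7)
The plan is to swap the expectation with the sum over surjections, reducing the corollary to the uniformity statement underlying Proposition~\ref{P:connComp}. A surjection from a $\BT_1$ $G$ over $\F_q$ to the constant group scheme $(\Z/p\Z)^d$ necessarily factors through the maximal \'etale quotient $G^{\et}$, and by \'etale descent corresponds to a Galois-equivariant surjection $G^{\et}(\overline{\F}_q)\twoheadrightarrow \F_p^d$, where $\Gal(\overline{\F}_q/\F_q)$ acts trivially on the target. From the proof of Proposition~\ref{P:connComp}, in the large-$g$ limit the action of Frobenius on $G^{\et}(\overline{\F}_q)\cong \F_p^N$ is recorded by a matrix $M$ that is uniformly distributed on $\GL_N(\F_p)$.

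Galois-equivariant surjections $\F_p^N\twoheadrightarrow \F_p^d$ are precisely the $\F_p$-linear surjections $f$ satisfying $f\circ M = f$, i.e., those factoring through $\coker(M-1)$. Since the number of surjections from a vector space depends only on its dimension, since $\dim\coker(M-1)=\dim\ker(M-1)$, and since $\F_p$-linear surjections $\F_p^n\twoheadrightarrow \F_p^d$ are in bijection (under $\F_p$-linear duality) with injections $\F_p^d\hookrightarrow \F_p^n$, the desired expectation rewrites as
\[
\EE\Bigl(\#\bigl\{(v_1,\ldots,v_d)\in (\F_p^N)^d : v_1,\ldots,v_d \text{ linearly independent and } Mv_i = v_i \text{ for all } i\bigr\}\Bigr).
\]

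I would then interchange expectation and sum over such $d$-tuples. By the uniformity of $M$ together with the transitivity of the $\GL_N(\F_p)$-action on ordered $d$-tuples of linearly independent vectors in $\F_p^N$, the probability that any particular tuple is pointwise fixed by $M$ equals $|\Stab_{\GL_N(\F_p)}(e_1,\ldots,e_d)|/|\GL_N(\F_p)|$, while the number of such tuples is the orbit size $|\GL_N(\F_p)|/|\Stab_{\GL_N(\F_p)}(e_1,\ldots,e_d)|$. These two factors cancel exactly, giving the value $1$.

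The main subtlety I anticipate is cleanly handling the passage to the $g\to\infty$ limit: Proposition~\ref{P:connComp} establishes the required uniformity of $M$ only asymptotically, so one must verify that, for fixed $d$, the contribution to $\EE_g$ coming from large values of $\dim G(\F_q)$ is uniformly small. This follows from the super-exponential decay in $n$ of the probabilities $P(n)$ appearing in \eqref{eq:cohenlenstra}, which ensures that the large-$n$ tails of $\sum_{n\geq d} P(n)\prod_{i=0}^{d-1}(p^n-p^i)$ are negligible; alternatively, one can bypass the swap argument entirely and compute this sum directly by $q$-Pochhammer manipulations, which is the elementary-abelian avatar of the classical Cohen-Lenstra moment formula.
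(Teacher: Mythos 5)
Your proposal is correct, and it supplies an argument the paper itself omits: Corollary~\ref{co:clmoments} is stated with no proof, being treated as immediate from Proposition~\ref{P:connComp}. Your route --- reduce group-scheme surjections $G\twoheadrightarrow(\Z/p\Z)^d$ to Frobenius-equivariant surjections out of $G^{\et}(\overline{\F}_q)$, pass to $\coker(M-1)$, convert to counting $M$-fixed linearly independent $d$-tuples, and cancel orbit size against stabilizer probability --- is exactly the Friedman--Washington style moment computation, and it closely parallels the paper's own Burnside/orbit-counting proof that $\EE X_m(D)=q^{\binom{m}{2}}$; the only thing to flag is that the Frobenius matrix is canonical only up to conjugacy (this is why the paper invokes the groupoid formalism of Proposition~\ref{p:groupoidquotient}), but your statistic, the number of fixed independent $d$-tuples, is conjugation-invariant, so uniformity of $M$ on the set $\GL_N(\F_p)$ is legitimately usable. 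One simplification: the limit-interchange worry in your last paragraph is unnecessary, because your orbit--stabilizer cancellation is exact for every finite $N\ge d$ (and the count is $0$ for $N<d$), so at finite $g$ the expectation equals the probability that the $p$-rank is at least $d$, which tends to $1$ by Proposition~\ref{pr:corank}; as written, your tail bound also quietly uses the limiting probabilities of \eqref{eq:cohenlenstra} where uniform-in-$g$ bounds would be needed, another reason to prefer the exact finite-$N$ statement. The alternative you mention --- summing $\#\mathrm{Surj}(\F_p^n,\F_p^d)$ against the distribution of Proposition~\ref{P:connComp} by $q$-series manipulations --- is the other natural reading of the paper's intent and buys a purely formal verification, at the cost of obscuring why the answer is exactly $1$.
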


We note that the distribution produced here is identical with the
Cohen-Lenstra heuristic for the distribution of $p$-ranks of imaginary
quadratic fields.  This is quite natural when one considers the $p$-torsion in the Jacobian of a random
hyperelliptic curve $C$ over a finite field.  When the field has
characteristic $p$, a heuristic of the form ``random hyperelliptic
curves have random pqp $\DBT_1$" would suggest that the finite abelian
$p$-group $\Jac(C)[p](\F_q)$ is distributed according to the
Cohen-Lenstra law --- in other words, that the conjectural distribution of $\Jac(C)[p](\F_q)$ is exactly
the same whether $C$ is defined over a finite field of characteristic $p$ or of characteristic prime to $p$ (as long as the field contains no $p$th roots of unity.)
  
In the case where $C$ is defined over a finite field $k$ whose characteristic is prime to $p$, the results of
\cite{EllenbergVW:cohenLenstra2}
 prove that the $d$th moment in
Corollary~\ref{co:clmoments}
 is indeed $1$ as long as $|k|$ is sufficiently
large relative to $p$ and not congruent to 1 mod $p$.  It would be interesting to see whether there
is any way to adapt the methods of
\cite{EllenbergVW:cohenLenstra2}
 to prove that similar statements hold in characteristic $p$.

\subsection*{Other statistics and questions: final types and Newton polygons}
\label{ss:otherStatistics}

We discuss some further problems which fit into our general framework, but which we have not investigated. \\

A more refined invariant of a $\DBT_1$ of dimension $2g$ is the {\bf
  final type}, a $g$-tuple $\tau = (x_1,\ldots, x_g)$ of non-decreasing
integers such that $x_1 \in \{0,1\}$ and $x_{i+1} \leq x_{i}
+ 1$. Such a tuple determines the isomorphism class of the corresponding group scheme
over an algebraic closure of $\F_q$ and, conversely, any such tuple
arises as the final type of a $\DBT_1$; see
\cite{pries:pDivThoughts}*{2.3}. We note that, unlike $a$-number
and $p$-rank, the final type
of a $\DBT_1$ depends on $F$ and $V$, not just
their restrictions to the maximal isotropic subspaces $\ker F$ and
$\ker V$.

Another invariant of a Dieudonn\'e module $D$ is its {Newton
  Polygon} -- setting $q = p^m$, the \textbf{Newton polygon} of $D$  
has, for every root $\alpha$ (counted with multiplicity) of the characteristic polynomial of $F^m$, 
 a segment of slope $\ord_p(\alpha)/m$; when $q = p$
this is just the Newton polygon of the characteristic polynomial of $F$. The Newton
polygon determines $D$ up to isogeny; see
\cite{Manin:commutativeFormalGroups}*{II, \S 4.1}. We note that,
unlike the $a$-number, $p$-rank, or the final type,
the Newton polygon of a Dieudonn\'e module $D$ is not determined by $D/pD$.
The question of which Newton polygons are
compatible with which final types is a subject of active
research ~\cite{Oort:foliationsArticle}.\\

\begin{questions}

\begin{itemize}
\item []

\item[(1)] The $p$-rank of a Dieudonn\'e module is equal to the number of
  segments of the Newton polygon of slope zero; a natural
  generalization of Theorem \ref{p:corank} is thus the following.   For $\lambda\in \Q\cap(0,1)$, let $\mult_{\lambda}(D)$ be the
  number of segments of slope $\lambda$ in the Newton polygon of
  $D$. Does $\mult_{\lambda}(\cdot)$ converge to a distribution as $g
  \to \infty$? Moreover, can one  compute, for a fixed non-negative integer $d$, the probability that $\mult_{\lambda}(D) = d$ or the
  average value of $\mult_{\lambda}(D)$?

\item[(2)] More generally:  the Newton polygon $D$ of a random Dieudonn\'{e} module has a local-local part $D^{\loc}$ as defined in Theorem~\ref{DieudonneMain}; $D^{\loc}$ has rank $2c$, where $c$ is the $p$-corank of $D$, and the Newton polygon of $D^{\loc}$ has all slopes in the open interval $(0,1)$.  Our expectation is that the probability distribution on the Newton polygon of $D^{\loc}$, conditional on the $p$-corank of $D$ being $c$, should be given by the probability distribution on Newton polygons of nilpotent
  $p$-autodual matrices on $\Zp^{2c}$.  We expect that one can compute this distribution by force for small $c$.

\item [(3)] One can generalize either of these questions by picking,
  for each $g$, a subset $S_g$ of the set of possible Newton polygons
  (resp., final types) and asking for the proportion of Dieudonn\'e
  modules whose appropriate invariant lies in $S_g$. 
 Of course, some conditions on $S_g$ will be necessary to ensure that the proportion approaches a limit. An example where we expect a positive answer would be that in which $S_g$ is the set of final types with
$\tau_{g-1} = g-1-s$ for a fixed integer $s$.
\end{itemize}
  
\end{questions}

\section{Random curves, random abelian varieties, and random $p$-divisible groups}

\label{ss:geometry}

So far, the content of this paper has been purely combinatorial; we
have computed moments and distributions of various statistics on
random pqp $\BT_1$'s and random $p$-divisible groups.  In practice,
pqp $p$-divisible groups typically arise from motives.  In this
section, we address the question of whether $p$-divisible groups
arising from random members of a family of abelian varieties are
random $p$-divisible groups in the sense of (\ref{eq:random}) below.

Let $M_1, M_2, \ldots $ be a family of schemes (or Deligne-Mumford stacks) and let $A_i$ be an abelian scheme over $M_i$.  The three cases we will consider are:
\begin{itemize}
\item $M_g = \HH_g$, the moduli space of hyperelliptic genus $g$ curves, and $A_g$ the Jacobian of the universal curve;
\item $M_g = \MM_g$, and $A_g$ the Jacobian of the universal curve;
\item $M_g  = \AA_g$, and $A_g$ the universal abelian $g$-fold.
\end{itemize}

We say that the $p$-divisible groups associated to such a family are ``random" with respect to a statistic $X$ if
\begin{equation}
\label{eq:random}
\lim_{g \ra \infty} \frac{\sum_{y \in M_g(\F_q)} X(A_{g,y}[p^\infty])}{|M_g(\F_q)|} = \EE X.
\end{equation}

Which of these families, with respect to which statistics, yield random $p$-divisible groups?  In this section we discuss the numerical evidence concerning this question, and some geometric properties of strata of moduli spaces in characteristic $p$ which seem closely related to the statistics in the first part of the paper.

\subsection*{Relation with geometry of moduli spaces}

In this short section we record some remarks about the relationship between the heuristics presented here and the cohomology of moduli spaces of curves and abelian varieties in positive characteristic.  There are no theorems in this section, only suggestive relationships between conjectures.

The prediction that the mod $p$ Dieudonn\'{e} module of the Jacobian of a random hyperelliptic curve over $\F_q$ is a random pqp $\DBT_1$ implies, in particular, that
\begin{equation}
\lim_{g \ra \infty} \HH_g^{\no}(\F_q) / \HH_g(\F_q) \ra 1 - \prod_{i=1}^\infty(1+q^{-i})^{-1} = 1/q + 1/q^3 + 1/q^4 + \ldots
\label{eq:mg}
\end{equation}
where $\HH_g^{\no}$ denotes the non-ordinary locus, a divisor in $\HH_g$.   (One can make an analogous guess with $\MM_g$ in place of $\HH_g$.)  Thus, the heuristic goes hand in hand with a belief that the non-ordinary locus is an {\em irreducible} divisor, at least for all sufficiently large $g$ -- otherwise, the ratio would have leading term $n/q$ instead of $1/q$, where $n$ is the number of $\F_q$-rational components of $\HH_g^{\no}$.

We emphasize that almost nothing is known about the irreducibility of the non-ordinary locus in $\HH_g$ or $\MM_g$ (see \cite[\S 3.2]{AchterP:monodromypRankMg} and \cite[\S 3.7]{achterP:pRankHyperelliptic}).  The non-ordinary locus in $\AA_g$, by contrast, is known to be irreducible.

For a family of curves over $\F_q$ with random $p$-divisible groups,
Proposition~ \ref{pr:corank} shows that the proportion of curves with
$a$-number $r$ and $p$-corank $s$ has leading term $q^{-{r +1 \choose
    2} + r -s}$, which suggests that the locus of cuves with
$a$-number $r$ and $p$-corank $s$ is an irreducible locally closed
subvariety of codimension ${r +1 \choose 2} + r -s$.  This is in fact
the codimension in $\AA_g$ of the locus of abelian varieties with
$a$-number $r$ and $p$-corank $s$ (see \cite{pries:pDivThoughts}*{2.3}); so the heuristics arising from random Dieudonn\'{e} modules can be read as supportive of (or supported by) the expectation that various natural loci of curves intersect the strata in $\AA_g/\F_q$ with the expected dimension.

The heuristic \eqref{eq:mg} can also be used to make guesses about the
cohomology of various strata in $\HH_g$ and $\MM_g$.  For example:
suppose that the restriction map from the cohomology of $\MM_g$ to the
cohomology of the closed subscheme $\MM_g^{\no}$ were an isomorphism
in some large range.  Then one might expect the ratio
$\MM_g^{\no}(\F_q) / \MM_g(\F_q)$ to be very close to $1/q$, contrary
to what the heuristic predicts. {\em Proving} any implication of this kind is well out of reach -- the Betti numbers of $\MM_g$ grow superexponentially in $g$, so even with control of the cohomology groups in some large range there is no hope of using Weil bounds to get a good approximation to $\MM_g(\F_q)$~\cite{deJongK:counting}.  

For hyperelliptic curves, the situation is a bit more legible.  For simplicity, write $X_n/\F_p$ for the configuration space parametrizing monic degree-$n$ squarefree polynomials $f(x)$, and let $X_n^{\no}$ be the closed subscheme parametrizing those polynomials such that the hyperelliptic curve
\beq
y^2 = f(x)
\eeq
is non-ordinary.  It is easy to check that $|X_n(\F_q)| = q^n -
q^{n-1}$ when $n \geq 1$; moreover, the \'etale cohomology of $X_n$ is concentrated in degrees $0$ and $1$.

If \eqref{eq:mg} holds, we would have
\beq
X_n^{\no}(\F_q) \sim (q^n - q^{n-1})(1/q + 1/q^3 + 1/q^4 + \ldots) = q^{n-1} - q^{n-2} + q^{n-3} \ldots
\eeq
This suggests that $X_n^{\no}$ has cohomology beyond the classes pulled
back from $X_n$; for instance, there should be a cohomology class in
some even degree generating a subspace on which Frobenius acts with
trace $q^{n-3}$.  Moreover, this class might be expected to vanish
when $\car \F_q = 3$, since the numerical data below suggests that in
characteristic $3$ the proportion of non-ordinary hyperelliptic curves
is precisely $1/q$. 

\begin{problem} Construct such a class in the locus of non-ordinary hyperelliptic curves.
\end{problem}

Corollary~\ref{co:clmoments} says that, on average, a random $\DBT_1$
admits a single surjection to $(\Z/p\Z)^d$.  Thus, in a family of
curves $X$ with random $p$-divisible group, parametrized by a moduli
scheme $M$, the average number of surjections from $\Jac(X)(\F_q)$ to
$(\Z/p\Z)^d$ is $1$.  This suggests that the moduli space $M_{p,d}$ is
irreducible, where $M_{p,d}$ is the moduli space parametrizing curves
$X$ in $M$ together with a level structure $\Jac(X) \ra (\Z/p\Z)^d$.
And this irreducibility for every $d$ suggests that the monodromy
representation of the moduli space of ordinary curves in $M$ on the
$g$-dimensional space of \'etale $p$-torsion in $\Jac(X)$ has image
containing $\SL_g(\F_p)$.  In fact, one could refine
Corollary~\ref{co:clmoments} to apply under supplementary conditions
on $p$-corank, $a$-number, etc. ---
 and the result would be a prediction that, on any of these $p$-adic strata, the monodromy in the \'etale $p$-torsion of $\Jac(X)$ has full image.  In fact, such theorems are already known in the case $M=\MM_g$~\cite{AchterP:monodromypRankMg} and $M = \HH_g$~\cite[\S 3.7]{achterP:pRankHyperelliptic}.  It seems reasonable to hope that the results of the those two papers could be used to prove that random hyperelliptic curves and random curves satisfy a weak version of the heuristic suggested by Corollary~\ref{co:clmoments}, where a limit $q \ra \infty$ is taken prior to the limit $g \ra \infty$.  This would be exactly analogous to the method used by Achter in \cite{Achter:distributionClassGroups} to derive a similarly weakened Cohen-Lenstra conjecture from a large-monodromy theorem in $\ell$-adic cohomology.

\section*{Experiments}
\label{sec:experiments}

The tables below contain experimental information about the
distribution of
$a$-numbers and orders mod $p$ of Jacobians of hyperelliptic and plane
curves.\\

The constants appearing in the tables are defined as follows. As in Theorem \ref{P:aNumber}
we define the constant $\MG(q,r)$
to be 
$$\MG(q,r) :=  q^{- {r +1 \choose 2}} \prod_{i=1}^\infty(1+q^{-i})^{-1} \prod_{i=1}^r (1-q^{-i})^{-1}.$$ 
Similarly, for a finite group $G$ of $p$-power order we define the {\bf Cohen-Lenstra probability} 
to be
$$C_p(G) := \frac{1}{|\Aut(G)|}\prod_{j=1}^\infty (1-p^{-j}),$$
and, finally, we define the {\bf truncated Malle-Garton constant} to be
$$\TMG(q;b) := \prod_{j=1}^{b} (1-q^{1-2j}).$$ 

Table \ref{Table:hyperellipticDistributionANumber} contains
distributions of $a$-numbers of Jacobians of hyperelliptic curves; we
explain below how the computations were done. As noted in the
introduction, the data suggests that the probability that the Jacobian
of a random hyperelliptic curve has $a$-number 0 does not approach the
value given by our heuristics. Rather, for $q = 3$ the data suggests
that the true probability is $2/3 = \TMG(3;1)$,  and for $q = 5$ it
suggests $0.7936 = \TMG(5;2)$. To
verify this, for $q = 5$ we took exhaustive data for low $g$ (i.e., computed the
$a$-number of the Jacobian of \emph{every} hyperelliptic curve of
genus $g$). For larger $g$ it is unreasonable to do an exhaustive
computation; for $g = 21$ we computed the $a$-numbers of the Jacobians
of 819200000 random hyperelliptic curves, and indeed the proportion
which were ordinary was closer to the truncated constant. For $q > 5$,
we did not generate enough data to distinguish between the
Malle-Garton constant and the truncated variant. 

It is natural to ask what the ``truncated" version of $\MG(q,r)$ should be for larger values of $r$. For instance, the proportion of hyperelliptic curves over $\F_3$ with $a$-number 1 appears to converging to a value $0.296....$ What limiting value (presumably a power series in $1/3$) is suggested by this experimental result?
\\

Table \ref{Table:planeDistributionANumber} contains
distributions of $a$-numbers of Jacobians of plane curves. 
The sample sizes are necessairly smaller than those of Table
\ref{Table:hyperellipticDistributionANumber} (see the comments in the
next section). 

The data for $q > 2$ agrees well with our heuristics, and in particular the truncation phenomenon
disappears (or the discrepancy from heuristics is too small for us to measure.) For $q = 2$, the data does not agree with our heuristics, and for this fact we have no conceptual explanation.  In particular, we do not see an explanation for this discrepancy along the lines of Theorem~\ref{T:oddPlane} below.
\\

Table \ref{Table:hyperellipticDistributionCL} (resp., Table
\ref{Table:planeDistributionCL}) contains the proportion of
hyperelliptic curves (resp., plane curves) $C$ such that
$p \nmid |\Jac_C(\F_q)|$ (where $p = \car \F_q$). 
For $q \neq 2$ (resp. $p > 2$) the data is consistent with the
heuristics suggested by Proposition \ref{P:connComp}. For $C$ hyperelliptic, since one can efficiently compute
the zeta function of $\Jac_C$ (and can thus detect when $p$ exactly
divides $|\Jac_C (\F_q)|$)  we also report the probability that
$\Jac_C[p^\infty](\F_q) \cong \Z/p\Z$. 
\\

\begin{rem}
  We find in Table~\ref{Table:planeDistributionCL} a notable
  divergence between experiment and heuristic for smooth plane curves
  in characteristic $2$; it appears that for plane curves $X$ of odd
  degree over $\F_q$ with $q = 2^m$, the order of $\Jac(X)(\F_q)$ is
  almost always even. This was puzzling to us until we
  realized that the behavior had a natural explanation; so natural
  that in fact we can prove that the behavior persists for plane
  curves of every odd degree.
\end{rem}

\begin{thm}
\label{T:oddPlane}
Let $d$ be a positive odd integer, let $k$ be a finite field of characteristic $2$, and let $F \in k[X,Y,Z]$ be a homogeneous degree $d$ form cutting out a smooth curve $X$ in $\P^2/k$.  Suppose furthermore that at least one monomial $X^a Y^b Z^c$ with two odd exponents has nonzero coefficient in $F$.  Then $|\Jac(X)(k)|$ is even.  In particular, the proportion of smooth degree-$d$ plane curves $X/k$ with $|\Jac(X)(k)|$ even goes to $1$ as $d$ goes to $\infty$.
\end{thm}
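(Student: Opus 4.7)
The plan is to exhibit an explicit nontrivial $k$-rational $2$-torsion line bundle on $\Jac(X)$, whose existence immediately forces $|\Jac(X)(k)|$ to be even; the density statement then follows from a simple codimension count.

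First I would normalize the defining equation using the characteristic-$2$ structure. Since $d$ is odd, any monomial of $F$ with two odd exponents forces the third exponent to be odd as well, and every degree-$d$ monomial with all three exponents odd has the form $XYZ\cdot(X^{\alpha}Y^{\beta}Z^{\gamma})^2$. Since Frobenius is a bijection on the finite field $k$, one can uniquely write
\beq
F \;=\; XYZ\cdot P^2 \;+\; X Q_1^2 \;+\; Y Q_2^2 \;+\; Z Q_3^2,
\eeq
where $P$ has degree $(d-3)/2$, each $Q_i$ has degree $(d-1)/2$, and the hypothesis of the theorem is equivalent to $P \neq 0$.

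Next I would derive the algebraic identities that drive the construction. In characteristic $2$,
\beq
F_X = YZP^2 + Q_1^2,\quad F_Y = XZP^2 + Q_2^2,\quad F_Z = XYP^2 + Q_3^2,
\eeq
and substituting $F = 0$ into the product $F_X F_Y$ yields, after the two terms $XYZ^2 P^4$ cancel in characteristic $2$, the key identity
\beq
F_X F_Y \;=\; (Q_1 Q_2 + ZPQ_3)^2 \qquad \text{in } k(X),
\eeq
together with its two cyclic analogues, so each of $F_X F_Y, F_Y F_Z, F_X F_Z$ is a square in $k(X)$. One also obtains the auxiliary identities $X F_X = Y Q_2^2 + Z Q_3^2$ (and cyclic permutations), which follow directly from $F = 0$. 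Setting $S_{XY} := Q_1 Q_2 + ZPQ_3$ (and analogously $S_{YZ}, S_{XZ}$), I would construct a rational function $u \in k(X)$ built from $P$, the $Q_i$, and the $S_{ij}$, whose Artin-Schreier class $[u] \in k(X)/\wp(k(X))$ (with $\wp(t) := t^2 + t$) defines an everywhere unramified, nontrivial, $k$-rational étale double cover of $X$. Such a cover yields a nontrivial element of $\Jac(X)[2]^{\et}(k)$, so $|\Jac(X)(k)|$ is even.

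The principal obstacle is arranging that $[u]$ be everywhere locally trivial in $k(X)_v/(\OO_v + \wp(k(X)_v))$ at each closed point $v$ of $X$ while remaining globally nontrivial. The auxiliary identities $X F_X = Y Q_2^2 + Z Q_3^2$, etc., govern the behavior of $u$ at the points of $X$ lying on the coordinate hyperplanes $\{X=0\}, \{Y=0\}, \{Z=0\}$, while the product identities $F_X F_Y = S_{XY}^2$, etc., govern the behavior at $\{P=0\}\cap X$ -- this last locus is where the hypothesis $P \neq 0$ enters essentially, via an iterated $\wp$-cancellation of the a priori odd-order poles of $u$.

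Finally, for the ``in particular" statement: writing $a = 2a'+1$, $b = 2b'+1$, $c = 2c'+1$ parametrizes the degree-$d$ monomials with all three exponents odd by triples $(a', b', c') \geq 0$ with $a' + b' + c' = (d-3)/2$, giving $\binom{(d+1)/2}{2}$ such monomials. Forcing the corresponding coefficients of $F$ to vanish is a codimension-$\binom{(d+1)/2}{2}$ linear condition on the affine space of degree-$d$ forms; since smoothness is a nonempty Zariski open condition, the proportion of smooth degree-$d$ plane curves failing the monomial hypothesis is at most $|k|^{-\binom{(d+1)/2}{2}}$, which tends to $0$ as $d \to \infty$.
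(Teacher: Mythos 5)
Your normalization $F = XYZ\,P^2 + X Q_1^2 + Y Q_2^2 + Z Q_3^2$ with the hypothesis equivalent to $P \neq 0$ is correct, and the identities $F_XF_Y = (Q_1Q_2 + ZPQ_3)^2$ and $XF_X = YQ_2^2 + ZQ_3^2$ on $F=0$ do check out. But the argument has a genuine gap at its center: the rational function $u$ whose Artin--Schreier class is supposed to give an everywhere unramified, globally nontrivial double cover is never written down, and neither the local triviality at the points of $X$ on the coordinate lines and on $\{P=0\}$ nor the global nontriviality is verified. You yourself label this ``the principal obstacle'' and then do not overcome it, so what remains is a program rather than a proof. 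There is also a smaller elision: an \'etale $\Z/2$-cover gives a nonzero Galois-fixed class in $H^1_{\et}(X_{\overline{k}},\Z/2)$, which is naturally the \emph{dual} of $\Jac(X)[2]^{\et}(\overline{k})$ as a Frobenius module, not literally a point of $\Jac(X)[2](k)$; over a finite field one can still conclude $\Jac(X)[2](k)\neq 0$ because $\ker(A-1)$ and $\ker(A^{T}-1)$ have equal dimension, but this step needs to be said. Finally, in the density statement the bound ``at most $|k|^{-\binom{(d+1)/2}{2}}$ among smooth curves'' requires a lower bound on the proportion of smooth degree-$d$ forms that is uniform in $d$ (e.g.\ Poonen's Bertini theorem, giving density $\zeta_{\P^2}(3)^{-1}$); ``smoothness is a nonempty open condition'' for each fixed $d$ does not by itself give this.

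For comparison, the paper avoids the covering-space construction entirely and builds the $2$-torsion class directly inside $\Pic(X)$: in characteristic $2$ the exact differential $\omega = d(\ell_1/\ell_2)$ has divisor $2D$, so $D$ is a theta characteristic canonically attached to $X$; since $d$ is odd, $\tfrac{d-3}{2}\Div(\ell_2)$ is a second $k$-rational theta characteristic, and their difference is a $k$-rational $2$-torsion class. Its triviality is equivalent to $\dfrac{dF}{d\ell_1}\,\ell_2^{1-d}$ being a square in $k(X)^*$, i.e.\ to $\partial F/\partial \ell_1$ being the square of a form of degree $(d-1)/2$ --- which in your notation (with $\ell_1 = X$) is exactly the condition $P = 0$. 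So the paper's route produces the required point of $\Jac(X)[2](k)$ in a few lines, whereas your route would, even if completed, produce the Poincar\'e-dual object (an unramified $\Z/2$-cover) at the cost of a delicate local analysis that is precisely the part left unproved. If you want to salvage your approach, the cleanest fix is to replace the Artin--Schreier construction by the half-canonical one above, keeping your decomposition of $F$ as the verification that the hypothesis is equivalent to $\partial F/\partial X$ not being a square.
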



\begin{proof}
Let $\ell_1, \ell_2$ be distinct linear forms, and let $\omega$ be the exact differential $d(\ell_1 / \ell_2)$ on $X$.  The divisor of $\omega$ is automatically a square, since locally one is just computing the derivative of a Laurent series in $k((t))$, and all such derivatives lie in the field of squares $k((t^2))$.  Let $D$ be the divisor such that $2D = \Div(\omega)$; then $D$ is evidently a half-canonical divisor, and its divisor class is independent of the choice of $\omega$ (see e.g., \cite[\S 3]{StohrV:cartier}.)

On the other hand, the canonical class on a degree-$d$ plane curve is
$(d-3)$ times the hyperplane class.  Thus, the divisor
$(1/2)(d-3)\Div(\ell_2)$ 
is also a half-canonical.  The difference between these two half-canonicals is a $2$-torsion point on $\Jac(X)(k)$; it remains to show that this point is nonzero under the given conditions.

Note that $D - (1/2)(d-3)\Div(\ell_2)$ is principal if and only if the
principal divisor $\Div(\omega) - (d-3)\Div(\ell_2)$ is the divisor of
a function $f \in (k(X)^*)^2$.
Moreover, a direct computation shows that $\Div(\omega) - (d-3)\Div(\ell_2)$
is the divisor of the function
\beq
\frac{dF}{d \ell_1} \ell_2^{1-d}
\eeq
which is a square only if $dF/d \ell_1$ is the square of a homogeneous
form of degree $(1/2)(d-1)$.  It is easy to see that this is
equivalent to the failure of the condition in the theorem.

\end{proof}

We remark that the converse to Theorem~\ref{T:oddPlane} does not hold;
even when the two half-canonicals constructed in the proof do agree,
there is no reason there might not be another $\F_q$-rational $2$-torsion point on $\Jac(X)$.

\subsection*{Methods of computation}

Let $C$ be a curve over $\F_q$ and let $(W, F, V, \overline{\omega})$
be the pqp $\DBT_1$ associated to the
$p$-torsion subgroup scheme of the Jacobian of $C$. Then there exists
a canonical
isomorphism $W \cong H^1_{\text{dR}}(C)$ such that the induced action
of $F$ (resp., $V$) on $H^1_{\text{dR}}(C)$ is equal to the action of
Frobenius (resp., the Cartier operator) \cite{Oda:dR}*{Section 5}, and $\overline{\omega}$
agrees with the cup product pairing. The actions of $F$ and $V$
respect the Hodge filtration
\[
\xymatrix{
0 \ar[r]& H^0(X, \Omega^1_C) \ar[r]& H^1_{\text{dR}}(C) \ar[r]& H^1(C, \mathcal{O}_C) \ar[r]& 0. 
}
\]
Moreover, the action of $F$ on $H^0(X, \Omega)$ is visibly
trivial and, dually, $V(H^1_{\text{dR}}(C)) = H^0(X, \Omega)$; in
particular, there exists a basis of $H^1_{\text{dR}}(C)$ with respect
to which the semilinear maps $F$ and $V$ correspond to the matricies
\begin{equation}
\label{eq:frob}
 \mat{0}{B}{0}{D} \text{ and }
 \mat{A}{C}{0}{0},
\end{equation}
where $A,B,C,D \in \M_g(\F_q)$. Since $\ker V = \im F$, it follows that
$a(W) = \dim (\ker F \cap \ker V)$ is equal to the nullity of $A$; by duality this is the same as the
nullity of $D$.

The matrix $A$ is called the \textbf{Cartier-Manin matrix} of $C$. When $C$ is a
hyperelliptic curve with affine equation $y^2 = f(x)$, there exists a
basis of $H^1_{\text{dR}}(C)$ with respect to which $A$ is given by
the explicit formula $(c_{pi-j})$, where $f(x)^{\frac{p-1}{2}} = \sum
c_kx^k$ and $p = \text{char } \F_q$; see \cite{Yui:jacobians} for details. In particular, one can quickly
compute the Cartier-Manin matrix, and thus $a$-number, of a hyperelliptic curve.

Moreover, one can efficiently compute $|\Jac_C(\F_{q})|$ mod $p$ (where
$q = p^r$) from the Cartier-Manin matrix. Indeed, $|\Jac_C(\F_q)| =
P(1)$, where $P$ is the characteristic polynomial of the
$r$th power of Frobenius acting
on $H^1_{\text{\'et}}(C; \Q_{\ell})$ for any $\ell \neq p$; $P$ has integral
coefficients and its reduction mod $p$ is equal to the characteristic
polynomial of the $r$th power of Frobenius acting on $H^1_{\text{dR}}(C)$. Choosing
a basis so that the matrix corresponding to $F$ is as in (\ref{eq:frob}), $F^r$ will be of
the form 
\[
F^r = \mat{0}{B'}{0}{D'},\, D' =  D\cdot\sigma(D)\cdots\sigma^{r-1}(D)
\]
and it thus suffices to
compute the value of the characteristic polynomial of $D'$ at 1. We can
therefore quickly compute the order of $\Jac_C (\F_q)$ mod $p$ from the
Cartier-Manin matrix of $C$.
\\

When $C$ is a plane curve, we do not know of
an explicit formula for $A$ or $D$ in terms of the coefficients of the defining equations
of $C$. There is however an algorithm, implemented as Magma's
\verb+CartierRepresentation+ function, which,
for a particular curve $C$ computes a representative of $A$. This
computation is much slower than in the hyperelliptic case;
accordingly, the sample sizes are smaller for plane curves.\\

Magma code which performs these computations can be found at the Arxiv
page for this paper or at
the third author's web page \cite{CaisEZB:randomDieudonneTranscript}.

\begin{center}
\begin{longtable}{|c|c|c|c|c|c|c|} 
\caption{Distribution of $a$-numbers of Jacobians of hyperelliptic curves.}\label{Table:hyperellipticDistributionANumber} \\

\hline \multicolumn{1}{|c|}{\,\,$q$\,\,} & \multicolumn{1}{|c|}{genus} & \multicolumn{1}{|c|}{Sample Size} & \multicolumn{1}{|c|}{$a$}& \multicolumn{1}{|c|}{Proportion} & \multicolumn{1}{c|}{$\MG(q,a)$} & 
\multicolumn{1}{c|}{$\TMG(p;\frac{p-1}{2})$} \\ \hline \endfirsthead

\multicolumn{7}{c}%
{{\bfseries \tablename\ \thetable{} -- continued from previous page}} \\
\hline \multicolumn{1}{|c|}{\,\,$q$\,\,} & \multicolumn{1}{|c|}{genus} & \multicolumn{1}{|c|}{Sample Size} & \multicolumn{1}{|c|}{$a$}& \multicolumn{1}{|c|}{Proportion} & \multicolumn{1}{c|}{$\MG(q,a)$} & 
\multicolumn{1}{c|}{$\TMG(p;\frac{p-1}{2})$} \\ \hline
\endhead

\hline \multicolumn{7}{|c|}{{Continued on next page}} \\ \hline
\endfoot

\hline 
\endlastfoot
 3   &  25    &  40960000  & 0  &  0.666716  &  0.639005  &  0.666666 \\
      &         &            & 1  &  0.296272              &  0.319502  &  \\
      &         &            & 2  &  0.0328910            &  0.0399378  &  \\ \cline{2-7}
      &  100  &  5120000  & 0  &  0.666497  &  0.639005  &  0.666666 \\
      &         &            & 1  &  0.296487  &  0.319502  &  \\
      &         &            & 2  &  0.0329145  &  0.0399378  &  \\ \hline
5    &     5  & exhaustive &  0 &  0.793278 &  0.793335  &  0.793600 \\ \cline{2-7}
      &    6   & exhaustive     &  0  &  0.793875 &    &   \\ \cline{2-7}
      &   7   &   exhaustive    &  0 &   0.793557 &    &   \\ \cline{2-7}
      &  21  &  819200000 & 0  & 0.793838  &  0.793335  &  0.793600 \\ \cline{2-7}
      &  25  &  40960000  & 0  &  0.793529  &  0.793335  &  0.793600 \\
      &        &            & 1  &  0.198172  &  0.198334  &  \\
      &        &            & 2  &  0.00822029  &  0.00826392  &  \\ \cline{2-7}
      & 100 &  5120000  & 0  &  0.793838  &  0.793335  &  0.793600 \\
      &        &            & 1  &  0.197818  &  0.198334  &   \\
      &        &            & 2  &  0.00826679  &  0.00826392  &   \\ \hline
7    &   25  &  40960000  & 0  &  0.854542  &  0.854593  &  0.854594 \\
      &        &            & 1  &  0.142490  &     0.142432  &   \\
      &        &            & 2  &  0.00295969  &  0.00296733  &   \\ \hline
9    &   25 & 12735000  & 0  &  0.888970  &  0.887655  &  0.888889 \\
      &        &            & 1  &  0.109666 &  0.110957  &   \\
      &        &            & 2  &  0.00134582  &  0.00138696  &   \\ \cline{2-7}
      & 100 & 15500  & 0  &  0.888774  &  0.887655  &  0.888889 \\
      &       &             & 1  &  0.109871  &  0.110957  &   \\
      &       &             & 2  &  0.00135484  &  0.00138696  &   \\ \hline
25  &  25 &  12640000  & 0  &  0.959962  &  0.959939  &  0.959939 \\
      &       &                    & 1  &  0.0399742     &  0.0399975 &   \\
      &       &                    & 2  &  6.43987E-5  &  6.40985E-5  &   \\ \cline{2-7}
      & 100 &  1036000  & 0  &  0.959822  &  0.959939  &  0.959939 \\
      &       &            & 1  &  0.0401110  & 0.0399975    &   \\
      &       &            & 2  &  6.75676E-5  &  6.40985E-5  &   \\ \hline
27  &  25  & 13030000  & 0  & 0.962955   &  0.962914  &  0.962963 \\
      &       &                  & 1  &  0.0369945 &  0.0370352  &   \\
      &       &                  & 2  &  5.07291E-5 &  5.08724E-5  & \\ \cline{2-7}
    & 100 & 1044000  & 0  &  0.962741  &  0.962914  &  0.962963 \\
      &       &                  & 1  & 0.0372021 &  0.0370352  &   \\
      &       &                  & 2  & 5.74713E-5  &  5.08724E-5  & \\ \hline
49  &  25  &  20480000  & 0  &  0.979568  &  0.979583  &  0.979583 \\ \hline
81  &  25  &  20480000  & 0  &  0.987662  &  0.987653  &  0.987655 \\ \hline
125 &  25 &  20480000  & 0  &  0.991984  &  0.991999  &  0.991999 \\ \hline

\end{longtable}
\end{center}

\begin{center}
\begin{longtable}{|c|c|c|c|c|c|c|} 
\caption{Distribution of $a$-numbers of Jacobians of plane curves.}\label{Table:planeDistributionANumber} \\

\hline \multicolumn{1}{|c|}{\,\,$q$\,\,} &
\multicolumn{1}{|c|}{degree} &\multicolumn{1}{|c|}{genus} & \multicolumn{1}{|c|}{$a$} & \multicolumn{1}{|c|}{Sample Size} & \multicolumn{1}{|c|}{Proportion} & \multicolumn{1}{c|}{$\MG(q,a)$} \\ \hline \endfirsthead

\multicolumn{7}{c}%
{{\bfseries \tablename\ \thetable{} -- continued from previous page}} \\
\hline \multicolumn{1}{|c|}{\,\,$q$\,\,} &
\multicolumn{1}{|c|}{degree}   &
\multicolumn{1}{|c|}{genus} & \multicolumn{1}{|c|}{$a$} & \multicolumn{1}{|c|}{Sample Size} & \multicolumn{1}{|c|}{Proportion} & \multicolumn{1}{c|}{$\MG(q,a)$} \\ \hline
\endhead

\hline \multicolumn{7}{|c|}{{Continued on next page}} \\ \hline
\endfoot

\hline 
\endlastfoot
2  &  7  &  15    & 0 &  56949615  &  0.426022  &  0.419422 \\    
    &     &         & 1 &        &  0.422294  &  0.419422 \\
    &     &         & 2 &                &      0.109071  &  0.139807 \\ \cline{2-7}
    &  10  &  36  & 0 & 80000  &  0.423363  &  0.419423      \\ \hline
3  &  7  &  15   & 0 & 3062000  &  0.638947  &  0.639006    \\
    &     &         & 1 &    &  0.319267   & 0.319502  \\
    &     &         & 2 &                &  0.0404950  & 0.0399378  \\\cline{2-7}
    &  8  &  21   & 0 & 249230  &  0.638133  &  0.639006 \\ \cline{2-7}
    &  10  &  36  & 0 & 154000  & 0.639273  &  0.639006 \\ 
    &     &         & 1 &                & 0.318792 & 0.319502  \\
    &     &         & 2 &                & 0.0404481 & 0.0399378 \\ \hline
4  &  7  &  15    & 0 & 2782000  & 0.737809  &  0.737513 \\    
    &     &           & 1 &               &  0.245737  &  0.245837 \\
    &      &           & 2 &              &  0.0152703  &  0.0163892 \\  \cline{2-7}
    &  10  &  15    & 0 & 521000  &  0.739500 &   \\    
    &     &           & 1 &               &  0.242875  & \\
    &      &           & 2 &              &  0.0174167  &   \\ \hline
5  &  7  &  15   & 0 & 590000  &  0.793784  &  0.793335 \\\cline{2-7}
    &  10  &  36  & 0 & 17851  &  0.796874  &  0.793335 \\ \hline
9  &  7  &  15   & 0 & 1080000 &  0.887926  &  0.887654  \\
    &     &         & 1 &               &  0.110636  & 0.110957  \\
    &     &         & 2 &               &   0.00143426  & 0.00138696 \\ \cline{2-7}
    &  10  &  36   & 0 & 102000  & 0.888040     &    \\
    &     &         & 1 &                & 0.110549  &  \\
    &     &         & 2 &                & 0.00140196 & \\ \hline 
25&  7  &  15   & 0 & 563000   & 0.960135      &   0.959938 \\
    &     &         & 1 &               & 0.0398114   &   0.0399975 \\
    &     &         & 2 &               &  5.33808E-5 &  6.40985E-5 \\ \cline{2-7}
    & 10 &  36   & 0 & 36000    & 0.958667     &    \\
    &     &         & 1 &                & 0.0412778  &  \\
    &     &         & 2 &                & 5.55555E-5  & \\ \hline 
27&  7  &  15   & 0 & 947000   &  0.962757   &   0.962914 \\
    &     &         & 1 &               &  0.0371953  &  0.0370352 \\
    &     &         & 2 &               &   4.75185E-5  & 5.08724E-5  \\ \cline{2-7}
    & 10 &  36   & 0 & 89000    & 0.962023     &    \\
    &     &         & 1 &                & 0.0378652  &  \\
    &     &         & 2 &                & 0.000112360  & \\ \hline


\end{longtable}
\end{center}

\begin{center}
\begin{longtable}{|c|c|c|c|c|c|} 
\caption{Distribution of $|\Jac[p^\infty](\F_q)|$ for Jacobians of
  hyperelliptic curves.}\label{Table:hyperellipticDistributionCL} \\

\hline \multicolumn{1}{|c|}{\,\,$q$\,\,} & \multicolumn{1}{|c|}{genus}
&\multicolumn{1}{|c|}{$|\Jac(\F_q)[p^\infty]|$} & \multicolumn{1}{|c|}{Sample Size} &
\multicolumn{1}{|c|}{Proportion} & \multicolumn{1}{c|}{$C_p(G)$} \\
\hline \endfirsthead

\multicolumn{6}{c}%
{{\bfseries \tablename\ \thetable{} -- continued from previous page}} \\
\hline \multicolumn{1}{|c|}{\,\,$q$\,\,} & \multicolumn{1}{|c|}{genus}  & \multicolumn{1}{|c|}{$|\Jac(\F_q)|$} & \multicolumn{1}{|c|}{Sample Size} & \multicolumn{1}{|c|}{Proportion} & \multicolumn{1}{c|}{$C_p(r)$} \\ \hline
\endhead

\hline \multicolumn{6}{|c|}{{Continued on next page}} \\ \hline
\endfoot

\hline 
\endlastfoot
3  &  12  &  $p$  &  20000  &  0.280100  &  0.280063 \\ \cline{2-6}
   &  20  &  1  &  1600000  &  0.560527  &  0.560126 \\ \cline{2-6}
   &  25  &  1  &  400000  &  0.560198  &  0.560126 \\ \hline
5  &  8  &  $p$  &  5000  &  0.193200  &  0.190083 \\ \cline{2-6}
   &  15  &  1  &  1600000  &  0.759874  &  0.760333 \\ \cline{2-6}
   &  25  &  1  &  400000  &  0.760077  &  0.760333 \\ \hline
7  &  15  &  1  &  1600000  &  0.837408  &  0.836796 \\ \cline{2-6}
   &  25  &  1  &  400000  &  0.836867  &  0.836796 \\ \hline
9   &  12  &  1  & 274016000  & 0.560169   & 0.560126  \\ \hline
11 &  15  &  1  &  1600000  &  0.900908  &  0.900833 \\ \cline{2-6}
   &  25  &  1  &  400000  &  0.900988  &  0.900833 \\ \hline
25   &  12  &  1  & 269210000 & 0.760389   &   0.760333 \\ \hline
 
\end{longtable}
\end{center}

\begin{center}
\begin{longtable}{|c|c|c|c|c|c|} 
\caption{Probability that $ p \nmid |\Jac(\F_q)|$ for Jacobians of plane curves.}\label{Table:planeDistributionCL} \\

\hline \multicolumn{1}{|c|}{\,\,$q$\,\,} & \multicolumn{1}{|c|}{degree} &\multicolumn{1}{|c|}{genus} & \multicolumn{1}{|c|}{Sample Size} & \multicolumn{1}{|c|}{Proportion} & \multicolumn{1}{c|}{$C_p(1)$} \\ \hline \endfirsthead

\multicolumn{6}{c}%
{{\bfseries \tablename\ \thetable{} -- continued from previous page}} \\
\hline \multicolumn{1}{|c|}{\,\,$q$\,\,} & \multicolumn{1}{|c|}{degree}  & \multicolumn{1}{|c|}{genus} & \multicolumn{1}{|c|}{Sample Size} & \multicolumn{1}{|c|}{Proportion} & \multicolumn{1}{c|}{$C_p(1)$} \\ \hline
\endhead

\hline \multicolumn{6}{|c|}{{Continued on next page}} \\ \hline
\endfoot

\hline 
\endlastfoot
 2 &  7   & 15 & 37940000  & 0.071762  & 0.288788 \\ 
   &  8   & 21 & 1777100    & 0.229230  & \\ 
    &  9   & 28 & 313000      & 0.000223  &  \\ 
   &  10 & 36 & 349500 & 0.246071 &  \\ 
    &  11 & 55 & 312000      & 0.0000064 &  \\ \hline
3 &  7   & 15 & 137000      & 0.553302   & 0.560126 \\ 
   &  10 & 36 & 142000 & 0.559676 &   \\ \hline
4 & 9   & 28 & 47775  & 0.000000  & 0.288788   \\
   & 10  & 36 & 31200  & 0.285673 & \\ \hline
5 &  7   & 15 & 350000 & 0.760462 & 0.760332   \\ 
   &  10 & 36 & 48000 & 0.761500 &  \\ \hline
8 &  7   & 15 &  219725  & 0.000000  &   0.288788 \\ 
   &  8   & 21 & 89575   & 0.283349  &  \\ \hline
9 &  10   & 36 & 57000 & 0.558772  & 0.560126   \\ 
\hline 

\end{longtable}
\end{center}

 \def\cprime{$'$}
\begin{bibdiv}
\begin{biblist}

\bib{Achter:distributionClassGroups}{article}{
      author={Achter, Jeffrey~D.},
       title={The distribution of class groups of function fields},
        date={2006},
        ISSN={0022-4049},
     journal={J. Pure Appl. Algebra},
      volume={204},
      number={2},
       pages={316\ndash 333},
  url={http://dx.doi.org.ezproxy.library.wisc.edu/10.1016/j.jpaa.2005.04.003},
      review={\MR{2184814 (2006h:11132)}},
}

\bib{AchterP:monodromypRankMg}{article}{
      author={Achter, Jeffrey~D.},
      author={Pries, Rachel},
       title={Monodromy of the {$p$}-rank strata of the moduli space of
  curves},
        date={2008},
        ISSN={1073-7928},
     journal={Int. Math. Res. Not. IMRN},
      number={15},
       pages={Art. ID rnn053, 25},
      review={\MR{2438069 (2009i:14030)}},
}

\bib{achterP:pRankHyperelliptic}{article}{
      author={Achter, J.D.},
      author={Pries, R.},
       title={The p-rank strata of the moduli space of hyperelliptic curves},
        date={2011},
     journal={Advances in Mathematics},
}

\bib{Artin:geometricAlgebra}{book}{
      author={Artin, E.},
       title={Geometric algebra},
   publisher={Interscience Publishers, Inc., New York-London},
        date={1957},
      review={\MR{0082463 (18,553e)}},
}

\bib{CaisEZB:randomDieudonneTranscript}{misc}{
      author={Cais, Bryden},
      author={Ellenberg, Jordan},
      author={Zureick-Brown, David},
       title={Electronic transcript of computations for the paper `{R}andom
  {D}ieudonn\'{e} modules, random $p$-divisible groups, and random curves over
  finite fields'},
         url={http://www.mathcs.emory.edu/\textasciitilde dzb/},
        note={Available at \url{http://www.mathcs.emory.edu/\textasciitilde
  dzb/}. (Also attached at the end of the tex file.)},
}

\bib{cohen1984heuristics}{article}{
      author={Cohen, H.},
      author={Lenstra, H.},
       title={Heuristics on class groups of number fields},
        date={1984},
     journal={Number Theory Noordwijkerhout 1983},
       pages={33\ndash 62},
}

\bib{Demazure:divisibleLecture}{book}{
      author={Demazure, Michel},
       title={Lectures on {$p$}-divisible groups},
      series={Lecture Notes in Mathematics, Vol. 302},
   publisher={Springer-Verlag},
     address={Berlin},
        date={1972},
      review={\MR{0344261 (49 \#9000)}},
}

\bib{deJongK:counting}{article}{
      author={de~Jong, A.~Johan},
      author={Katz, Nicholas~M.},
       title={Counting the number of curves over a finite field},
        date={2000},
     journal={Preprint},
}

\bib{EllenbergVW:cohenLenstra2}{article}{
      author={Ellenberg, Jordan~S.},
      author={Venkatesh, Akshay},
      author={Westerland, Craig},
       title={Homological stability for {H}urwitz spaces and the
  {C}ohen-{L}enstra conjecture over function fields 2},
        date={2012},
     journal={Preprint},
}

\bib{Fontaine:divisible}{book}{
      author={Fontaine, Jean-Marc},
       title={Groupes {$p$}-divisibles sur les corps locaux},
   publisher={Soci\'et\'e Math\'ematique de France},
     address={Paris},
        date={1977},
        note={Ast{\'e}risque, No. 47-48},
      review={\MR{0498610 (58 \#16699)}},
}

\bib{Fulman:conjugacySymplecticOrthogonal}{article}{
      author={Fulman, Jason},
       title={A probabilistic approach to conjugacy classes in the finite
  symplectic and orthogonal groups},
        date={2000},
        ISSN={0021-8693},
     journal={J. Algebra},
      volume={234},
      number={1},
       pages={207\ndash 224},
         url={http://dx.doi.org/10.1006/jabr.2000.8455},
      review={\MR{1799484 (2002j:20094)}},
}

\bib{friedman1989distribution}{article}{
      author={Friedman, E.},
      author={Washington, L.C.},
       title={On the distribution of divisor class groups of curves over a
  finite field},
        date={1989},
     journal={Th{\'e}orie des nombres (Quebec, PQ, 1987), de Gruyter, Berlin},
       pages={227\ndash 239},
}

\bib{Garton:thesis}{article}{
      author={Garton, Derek},
       title={Random matrices and the {C}ohen-{L}enstra statistics for global
  fields with roots of unity},
        date={2012},
     journal={UW-Madison thesis, in preparation},
}

\bib{Grothendieck:barsottiTate}{book}{
      author={Grothendieck, Alexandre},
       title={Groupes de {B}arsotti-{T}ate et cristaux de {D}ieudonn\'e},
   publisher={Les Presses de l'Universit\'e de Montr\'eal, Montreal, Que.},
        date={1974},
        note={S{\'e}minaire de Math{\'e}matiques Sup{\'e}rieures, No. 45
  ({\'E}t{\'e}, 1970)},
      review={\MR{0417192 (54 \#5250)}},
}

\bib{Holland:counting}{misc}{
      author={Holland, Timothy},
       title={Counting semilinear endomorphisms over finite fields},
        date={2011},
}

\bib{Malle:distribution}{article}{
      author={Malle, Gunter},
       title={On the distribution of class groups of number fields},
        date={2010},
        ISSN={1058-6458},
     journal={Experiment. Math.},
      volume={19},
      number={4},
       pages={465\ndash 474},
         url={http://dx.doi.org/10.1080/10586458.2010.10390636},
      review={\MR{2778658 (2011m:11224)}},
}

\bib{Manin:commutativeFormalGroups}{article}{
      author={Manin, Ju.~I.},
       title={Theory of commutative formal groups over fields of finite
  characteristic},
        date={1963},
        ISSN={0042-1316},
     journal={Uspehi Mat. Nauk},
      volume={18},
      number={6 (114)},
       pages={3\ndash 90},
      review={\MR{0157972 (28 \#1200)}},
}

\bib{moonen:groupSchemes}{incollection}{
      author={Moonen, Ben},
       title={Group schemes with additional structures and {W}eyl group
  cosets},
        date={2001},
   booktitle={Moduli of abelian varieties ({T}exel {I}sland, 1999)},
      series={Progr. Math.},
      volume={195},
   publisher={Birkh\"auser},
     address={Basel},
       pages={255\ndash 298},
      review={\MR{1827024 (2002c:14074)}},
}

\bib{Oda:dR}{article}{
      author={Oda, Tadao},
       title={The first de {R}ham cohomology group and {D}ieudonn\'e modules},
        date={1969},
        ISSN={0012-9593},
     journal={Ann. Sci. \'Ecole Norm. Sup. (4)},
      volume={2},
       pages={63\ndash 135},
      review={\MR{0241435 (39 \#2775)}},
}

\bib{oort:stratification}{incollection}{
      author={Oort, Frans},
       title={A stratification of a moduli space of abelian varieties},
        date={2001},
   booktitle={Moduli of abelian varieties ({T}exel {I}sland, 1999)},
      series={Progr. Math.},
      volume={195},
   publisher={Birkh\"auser},
     address={Basel},
       pages={345\ndash 416},
      review={\MR{1827027 (2002b:14055)}},
}

\bib{Oort:foliationsArticle}{article}{
      author={Oort, Frans},
       title={Foliations in moduli spaces of abelian varieties},
        date={2004},
        ISSN={0894-0347},
     journal={J. Amer. Math. Soc.},
      volume={17},
      number={2},
       pages={267\ndash 296 (electronic)},
         url={http://dx.doi.org/10.1090/S0894-0347-04-00449-7},
      review={\MR{2051612 (2005c:14051)}},
}

\bib{PoonenR:maximalIsotropics}{article}{
      author={Poonen, Bjorn},
      author={Rains, Eric},
       title={Random maximal isotropic subspaces and {S}elmer groups},
        date={2012},
        ISSN={0894-0347},
     journal={J. Amer. Math. Soc.},
      volume={25},
      number={1},
       pages={245\ndash 269},
         url={http://dx.doi.org/10.1090/S0894-0347-2011-00710-8},
      review={\MR{2833483}},
}

\bib{pries:pDivThoughts}{incollection}{
      author={Pries, Rachel},
       title={A short guide to {$p$}-torsion of abelian varieties in
  characteristic {$p$}},
        date={2008},
   booktitle={Computational arithmetic geometry},
      series={Contemp. Math.},
      volume={463},
   publisher={Amer. Math. Soc.},
     address={Providence, RI},
       pages={121\ndash 129},
      review={\MR{2459994 (2009m:11085)}},
}

\bib{RudvalisS:enumeration}{article}{
      author={Rudvalis, A.},
      author={Shinoda, K.},
       title={An enumeration in finite classical groups},
        date={1988},
     journal={Preprint},
}

\bib{StohrV:cartier}{article}{
      author={St{\"o}hr, Karl-Otto},
      author={Voloch, Jos{\'e}~Felipe},
       title={A formula for the {C}artier operator on plane algebraic curves},
        date={1987},
        ISSN={0075-4102},
     journal={J. Reine Angew. Math.},
      volume={377},
       pages={49\ndash 64},
         url={http://dx.doi.org/10.1515/crll.1987.377.49},
      review={\MR{887399 (88g:14026)}},
}

\bib{Tate:pDivisible}{incollection}{
      author={Tate, J.~T.},
       title={{$p-divisible$} {$groups.$}},
        date={1967},
   booktitle={Proc. {C}onf. {L}ocal {F}ields ({D}riebergen, 1966)},
   publisher={Springer},
     address={Berlin},
       pages={158\ndash 183},
      review={\MR{0231827 (38 \#155)}},
}

\bib{Yui:jacobians}{article}{
      author={Yui, Noriko},
       title={On the {J}acobian varieties of hyperelliptic curves over fields
  of characteristic {$p>2$}},
        date={1978},
        ISSN={0021-8693},
     journal={J. Algebra},
      volume={52},
      number={2},
       pages={378\ndash 410},
      review={\MR{0491717 (58 \#10920)}},
}

\end{biblist}
\end{bibdiv}


\end{document}